\definecolor{mypink1}{rgb}{0.99, 0.82, 0.82}
\definecolor{myorange1}{rgb}{0.99, 0.8829, 0.6412}
\definecolor{myyellow1}{rgb}{0.99, 0.98, 0.6412}
\definecolor{myblue1}{rgb}{0.75, 0.889, 0.99}
\definecolor{mygreen1}{rgb}{0.77, 0.95, 0.72}
\definecolor{mypurple1}{rgb}{0.92, 0.779, 0.92}
\definecolor{myperiwinkle1}{rgb}{0.83, 0.8, 0.9912}
\definecolor{myturquoise1}{rgb}{0.74, 0.97, 0.92}
\definecolor{mygray}{gray}{0.6}
\def\ddb{\partial\bar\partial}
\author{Xi Sisi Shen}
\address{Department of Mathematics\\
  Columbia University\\
  New York, NY USA 10027}
\email[X. S. Shen]{xss@math.columbia.edu}
\author{Kevin Smith}
\address{Department of Mathematics\\
  Columbia University\\
  New York, NY USA 10027}
\email[K. Smith]{kjs@math.columbia.edu}
\newtheorem{proposition}{Proposition}
\newtheorem{thm}{Theorem}
\DeclareMathOperator\Ree{Re}
\DeclareMathOperator\Imm{Im}
\DeclareMathOperator\tr{tr}
\DeclareMathOperator\Ric{Ric}
\DeclareMathOperator\BC{BC}
\begin{document}
\bibliographystyle{amsplain}

\title{The continuity equation on Hopf and Inoue surfaces}
\maketitle

\begin{abstract}
We study the continuity equation of La Nave-Tian, extended to the Hermitian setting by Sherman-Weinkove, on Hopf and Inoue surfaces. We prove a priori estimates for solutions in both cases, and Gromov-Hausdorff convergence of Inoue surfaces to a circle.
\end{abstract}

\section{Introduction}\label{s1}

A parabolic approach to proving existence of K\"ahler-Einstein metrics on a K\"ahler manifold by means of the K\"ahler-Ricci flow was used by Cao \cite{cao85} to provide an alternative proof of the existence of K\"ahler-Einstein metrics on manifolds with $c_1(X)<0$ and $c_1(X)=0$, originally proved by Yau \cite{yau78} and also independently by and Aubin \cite{aubin78} in the case $c_1(X)<0$. The K\"ahler-Ricci flow preserves the Hermitian condition and has been shown to illuminate various complex and algebro-geometric properties of the manifold. Starting in 2007, Song-Tian \cite{song-tian07, song-tian12, st17} and Tian \cite{tian08} proposed an \textit{Analytic Minimal Model Program} to classify algebraic varieties using the K\"ahler-Ricci flow to  ``simplify" algebraic varieties to their minimal models. For example, the K\"ahler-Ricci flow can be set up to have the behavior of ``blowing down" (-1)-curves on a complex surface. La Nave-Tian propose in \cite{nt15} a continuity equation to provide an alternative method for carrying out the Analytic Minimal Model Program and show, under an assumption on the initial metric, the convergence of the metric to a weak K\"ahler-Einstein metric away from a subvariety. An advantage of the continuity equation is that solutions always have a Ricci lower bound which provides the groundwork to apply compactness theory by Cheeger-Colding-Tian and a partial $C^0$ estimate.

Beyond the K\"ahler setting, there is a classification of complex surfaces due to Enriques-Kodaira \cite{bhpv04} which states that  all minimal non-K\"ahler compact complex surfaces belong to one of the following classes:
\begin{itemize}
    \item Kodaira surfaces
    \item Minimal properly elliptic surfaces
    \item Class VII surfaces with vanishing second Betti number
    \begin{itemize}
        \item Hopf surfaces
        \item Inoue surfaces
    \end{itemize}
    \item Class VII surfaces with positive second Betti number
\end{itemize}
In particular, there is very little known about a special class of surfaces called the Class VII surfaces which are defined as complex surfaces with Kodaira dimension 1 and first Betti number 1. Hopf and Inoue surfaces are exactly those class VII surfaces with $b_2(X)=0$ \cite{bogomolov82,kodaira66,lyz94,teleman94}.

One way to extend the K\"ahler-Ricci flow to the Hermitian setting to study the complex structures on these non-K\"ahler surfaces is by the Chern-Ricci flow introduced by Gill \cite{gill11}. 
Let $(X,J,g_0)$ be a compact complex manifold of complex dimension $n$ and $g_0$ a Hermitian metric on $X$. 
The Chern-Ricci flow is given by
$$ \frac{d}{dt}\omega = -\Ric(\omega), \ \omega(0)=\omega_0,$$
where $\omega = i g_{\bar{k}j}dz^j\wedge d\bar{z}^k$ in local holomorphic coordinates, and $\Ric(\omega)$ to be the Chern-Ricci curvature associated to $g$ which is given by
\begin{align*}
    \Ric(\omega) = -i\partial\bar\partial\log\det g.
\end{align*}
For complex manifolds with $c_1^{\BC}(X)=0$, Gill \cite{gill11} proved the long time existence of the flow and smooth convergence of the flow to the unique Chern-Ricci-flat metric in the $\partial\bar\partial$-class of the initial metric. For general complex manifolds, Tosatti-Weinkove characterize the maximal existence time of the solution as well as detail the behavior of the flow in \cite{tw15}. They show that the Chern-Ricci flow can be used to contract$(-1)$ curves on a complex surface to arrive at a minimal surface. In addition, they provide a classification of complex surfaces based on the maximal existence time of the flow starting at a pluriclosed metric $\omega_0$. When $\omega_0$ is K\"ahler, the Chern-Ricci flow coincides with the K\"ahler-Ricci flow. The behavior of the Chern-Ricci flow on Hopf surfaces, Inoue surfaces and elliptic surfaces have been studied in \cite{tw13, tw15, twy15,ftwz16,edwards21, ta21}. Beyond the Chern-Ricci flow, there  exists a vast literature on complex flows of Hermitian metrics \cite{acs,bv17, bv20, bx, fppz21, fppz22, kawamura19, liu-yang12, ppz18-1, ppz18-2, shen22, streets_tian12,tw15, yury}.

In this paper, we study the continuity equation introduced by La Nave-Tian \cite {nt15} in the K\"ahler setting and extended to the Chern-Ricci case by Sherman-Weinkove\cite{sw20} in the case of Hopf and Inoue surfaces. The continuity equation starting at a Hermitian metric $\omega_0$ is given by
\begin{align}\label{s1_ContinuityEquation}
\omega = \omega_0 - s\Ric(\omega)
\end{align}
for $s\ge 0$, where $\Ric(\omega)$ is the Chern-Ricci curvature of $\omega$. This can be viewed as an elliptic alternative to the Chern-Ricci flow. One immediate advantage of this equation is a Ricci lower bound for $s\ge 0.$ The maximal existence interval for solutions to the continuity equation was shown by Sherman-Weinkove \cite{sw20}. They also prove convergence of solutions of the continuity equation on minimal properly elliptic surfaces; specifically, for elliptic bundles over a curve of genus at least 2, solutions to the continuity equation starting at a Gauduchon metric converge to the base curve in the Gromov-Hausdorff topology.

Let $\alpha = (\alpha_1,\ldots,\alpha_n)\in\mathbb{C}^n\setminus\{0\}$ with $|\alpha_1|=\cdots=|\alpha_n|\neq 1$. The round Hopf manifold is defined as $M_\alpha = (\mathbb{C}^n\setminus\{0\}) / \sim$, where $$(z^1,\ldots,z^n)\sim (\alpha_1 z^1,\ldots,\alpha_n z^n).$$ We define the metric $$\omega_{\mathrm{H}} = \frac{\delta_{\bar kj}}{r^2}dz^j\wedge d\bar z^k$$
where $r^2 = \sum_{j=1}^n |z_j|^2.$ Tosatti-Weinkove show that $\hat\omega_t := \omega_{\mathrm{H}} - t\Ric(\omega_{\mathrm{H}})$ gives an explicit solution to the Chern-Ricci flow for $t\in[0,\frac{1}{n})$ in \cite{tw13} and this solution also extends to case of the continuity equation.

\begin{thm}\label{s1_Theorem1}
Suppose that $\omega = \hat\omega + i\partial\bar\partial\varphi$ solves \eqref{s1_ContinuityEquation} on $M_\alpha\times[0,\frac{1}{n})$ with $\omega_0 = \omega_{\mathrm{H}} + i\partial\bar\partial\varphi_0$ and $\varphi$ normalized appropriately. Then
\begin{enumerate}
    \item $|\varphi|\leq C$
    \item $C^{-1}\hat\omega^n \leq \omega^n \leq C\hat\omega^n$
    \item $\omega \leq C\omega_{\mathrm{H}}$
    \item $\frac{C^{-1}}{(1-ns)^{1 - \frac{1}{n}}} \leq R(\omega)\leq \frac{C}{(1-ns)^{n-1}}$.
\end{enumerate} 
In particular, $\varphi$ converges subsequentially in $C^{1+\beta}$ as $s\to 1/n$.
\end{thm}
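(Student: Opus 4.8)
The plan is to rewrite \eqref{s1_ContinuityEquation} as a scalar complex Monge--Amp\`ere equation and then run a maximum--principle argument adapted to the explicit geometry of $\hat\omega$. Two identities drive everything: from $\hat\omega=(1-ns)\,\omega_{\mathrm H}+ns\,\tfrac{i\,\bar z_j z_k}{r^4}\,dz^j\wedge d\bar z^k$ one computes directly that
\[
\Ric(\hat\omega)=\Ric(\omega_{\mathrm H}),\qquad \hat\omega^{\,n}=(1-ns)^{\,n-1}\,\omega_{\mathrm H}^{\,n},
\]
the first because the two determinants differ by the constant factor $(1-ns)^{n-1}$, which is annihilated by $i\ddb$. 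Subtracting the equation satisfied by $\hat\omega$ from that satisfied by $\omega$ gives $i\ddb\bigl(\varphi-\varphi_0-s\log(\omega^n/\omega_{\mathrm H}^n)\bigr)=0$; since a round Hopf manifold carries no nonzero holomorphic $1$--form, every real pluriharmonic function on it is constant, so after absorbing that constant into the normalization the continuity equation becomes
\[
(\hat\omega+i\ddb\varphi)^n=e^{(\varphi-\varphi_0)/s}\,\hat\omega^{\,n}.
\]
This is the equation I would estimate, writing $F:=\log(\omega^n/\hat\omega^n)=(\varphi-\varphi_0)/s$.

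The analytic heart, and the step I expect to be hardest, is the second--order estimate (3). I would apply the maximum principle to $Q=\log\tr_{\omega_{\mathrm H}}\omega-A\varphi$ for a large constant $A$ and bound $\Delta_\omega Q$ using the usual Chern--Laplacian inequality for $\tr_{\omega_{\mathrm H}}\omega$ along a Monge--Amp\`ere equation. Because $\omega_{\mathrm H}$ is not K\"ahler, this produces torsion and curvature terms of the reference metric with no favourable sign; these are controlled by the explicit, uniformly bounded geometry of the Hopf metric, while the term $-A\Delta_\omega\varphi=-A\,(n-\tr_\omega\hat\omega)$ supplies the negativity needed to absorb them. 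The obstacle is uniformity as $s\to 1/n$: since $\hat\omega$ collapses at rate $(1-ns)$ in the $n-1$ directions tangent to the spheres $\{r=\mathrm{const}\}$, one must measure $\omega$ against the \emph{non}--degenerate $\omega_{\mathrm H}$ (hence the form of (3)) and control how the collapsing eigenvalues of $\hat\omega$ enter $\tr_\omega\hat\omega$, so that no spurious factor of $(1-ns)^{-1}$ survives in the final constant.

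Estimates (2) and (1) then follow. The bound $\tr_{\omega_{\mathrm H}}\omega\le C$ caps the eigenvalues $\lambda_1,\dots,\lambda_n$ of $\omega$ relative to $\omega_{\mathrm H}$; feeding this back into the Monge--Amp\`ere equation, together with the sharp rate $\hat\omega^n=(1-ns)^{n-1}\omega_{\mathrm H}^n$, confines $\omega^n/\hat\omega^n$ between two positive constants --- equivalently, it forces $\omega$ to collapse tangentially at exactly the rate of $\hat\omega$ while leaving one eigenvalue of size $O(1)$. This is (2), and (1) is immediate from it: $\varphi-\varphi_0=sF=s\log(\omega^n/\hat\omega^n)$ is bounded by $s$ times the uniform bound of (2), so $|\varphi|\le C$ uniformly for $s\in[0,1/n)$.

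Finally (4) is linear algebra once (2) and (3) are known. Tracing \eqref{s1_ContinuityEquation} gives the identity
\[
R(\omega)=\tfrac1s\bigl(\tr_\omega\omega_0-n\bigr).
\]
Since $\omega_0$ is uniformly equivalent to $\omega_{\mathrm H}$, the bounds $\omega\le C\omega_{\mathrm H}$ and $\omega^n\ge c\,\hat\omega^n$ force the smallest eigenvalue of $\omega$ relative to $\omega_{\mathrm H}$ to be $\ge c\,(1-ns)^{n-1}$, whence $\tr_\omega\omega_0\le C\,(1-ns)^{-(n-1)}$ and $R(\omega)\le C\,(1-ns)^{-(n-1)}$; applying the arithmetic--geometric mean inequality to $\tr_\omega\omega_{\mathrm H}=\sum_i\lambda_i^{-1}$ together with $\omega^n\le C\,\hat\omega^n$ gives $\tr_\omega\omega_{\mathrm H}\ge c\,(1-ns)^{-(n-1)/n}$ and hence $R(\omega)\ge c\,(1-ns)^{-(1-1/n)}$, the stated rates. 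For the concluding statement, (1) and (3) provide a uniform $C^0$ bound together with $-\omega_{\mathrm H}\le i\ddb\varphi\le C\,\omega_{\mathrm H}$, i.e.\ a uniform two--sided bound on the complex Hessian; local $W^{2,p}$ elliptic estimates upgrade this to uniform $C^{1+\beta}$ bounds, and Arzel\`a--Ascoli yields subsequential convergence as $s\to 1/n$.
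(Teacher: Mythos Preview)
Your derivation of the scalar equation and your treatment of part \textit{(4)} match the paper. The two substantive issues are the order in which you deduce \textit{(1)}--\textit{(3)} and, more seriously, the mechanism you propose for \textit{(3)}.

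\medskip

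\textbf{Order of \textit{(1)}, \textit{(2)}, \textit{(3)}.} You propose \textit{(3)} $\Rightarrow$ \textit{(2)} $\Rightarrow$ \textit{(1)}, but the step \textit{(3)} $\Rightarrow$ \textit{(2)} does not go through as stated. From $\tr_{\omega_{\mathrm H}}\omega\le C$ you only get $\omega^n\le C\,\omega_{\mathrm H}^n=C(1-ns)^{1-n}\hat\omega^n$, which blows up; ``feeding this back into the Monge--Amp\`ere equation'' does not help unless you already know $|\varphi|\le C$, which is \textit{(1)}. The paper instead proves \textit{(1)} first, by the one-line maximum principle on $\varphi$ that you already have available from your scalar equation $(\hat\omega+i\ddb\varphi)^n=e^{(\varphi-\varphi_0)/s}\hat\omega^n$: at a maximum of $\varphi$ the left side is $\le\hat\omega^n$, forcing $\varphi\le\varphi_0$, and similarly below. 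Then \textit{(2)} is immediate from \textit{(1)} and the scalar equation. This is not deep, but your ordering creates an apparent gap where none is needed.

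\medskip

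\textbf{The second-order estimate.} Your proposed quantity $Q=\log\tr_{\omega_{\mathrm H}}\omega-A\varphi$ runs into precisely the obstacle you flagged: $-A\Delta_\omega\varphi=A\tr_\omega\hat\omega-An$, and since $\hat\omega$ collapses in $n-1$ directions, $A\tr_\omega\hat\omega$ cannot uniformly dominate a term of the form $-C\tr_\omega\omega_{\mathrm H}$ as $s\to 1/n$; you identify this but do not resolve it. The paper's argument (following \cite{tw15}) sidesteps this entirely by dropping the $-A\varphi$ term and using the \emph{exact} Hopf identities rather than a generic bound. One computes
\[
g^{j\bar k}R_{\bar kj}{}^{p\bar q}g_{\bar qp}=\tfrac{1}{n}\tr_{\omega_{\mathrm H}}\omega\cdot\tr_\omega\Ric(\omega_{\mathrm H}),\qquad
g^{j\bar k}R^p{}_{p\bar kj}-g^{j\bar k}R_{\bar kj}{}^p{}_p-g^{j\bar k}g_{\mathrm H}^{p\bar q}g^{\mathrm H}_{\bar sr}T^r_{pj}\bar T^s_{qk}=-\tfrac{2}{n}\tr_\omega\Ric(\omega_{\mathrm H}),
\]
so that all the ``bad'' curvature and torsion contributions combine into
\[
\Delta_\omega\tr_{\omega_{\mathrm H}}\omega \ge \Bigl(\tfrac{1}{n}\tr_{\omega_{\mathrm H}}\omega-\tfrac{2}{n}\Bigr)\tr_\omega\Ric(\omega_{\mathrm H})-\tr_{\omega_{\mathrm H}}\Ric(\omega).
\]
Since $\Ric(\omega_{\mathrm H})\ge 0$, the first term on the right is nonnegative as soon as $\tr_{\omega_{\mathrm H}}\omega\ge 2$. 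One then uses the continuity equation itself, $-\Ric(\omega)=(\omega-\omega_0)/s$, to write the remaining term as $\tr_{\omega_{\mathrm H}}(\omega-\omega_0)/s$; at a maximum of $\tr_{\omega_{\mathrm H}}\omega$ this forces $\tr_{\omega_{\mathrm H}}\omega\le\tr_{\omega_{\mathrm H}}\omega_0$. The reference form $\hat\omega$ never appears, so no factor of $(1-ns)^{-1}$ can enter. This direct use of $-\Ric(\omega)=(\omega-\omega_0)/s$ in place of the $-A\varphi$ barrier is the idea your outline is missing.
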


\noindent Using the continuity equation allows us to obtain the estimates \textit{(2)} and \textit{(4)} which are not known for solutions to the Chern-Ricci flow. The estimates \textit{(1)} and \textit{(3)} were shown for solutions to the Chern-Ricci flow by Tosatti-Weinkove in \cite{tw15}. 

We now consider a primary Hopf surface of class 1. They are defined as $M_{\alpha,\beta} = (\mathbb{C}^2\setminus \{0\})/ \sim$, where $(z,w)\sim(\alpha z,\beta w)$ for $\alpha,\beta\in\mathbb{C},$ with $1<|\alpha|\le |\beta|$. On $M_{\alpha,\beta}$, we describe a few explicit $(1,1)$-forms (see \cite{go98}, \cite{edwards21} for details). The equation
\begin{align*}
|z|^2\Phi^{-2\gamma_1} + |w|^2\Phi^{-2\gamma_2} = 1
\end{align*}
uniquely defines a smooth function $\Phi(z,w)$ on $\mathbb C^2\setminus\{0\}$, where
\begin{align*}
\gamma_1 = \frac{\log|\alpha|}{\log|\alpha| + \log|\beta|},  \ \ \gamma_2 = \frac{\log|\beta|}{\log|\alpha| + \log|\beta|}
\end{align*}

\noindent The Gauduchon-Ornea metric
\begin{align*}
\omega_{\mathrm{GO}} = \frac{i\ddb\Phi}{\Phi}
\end{align*}
is a well-defined Hermitian metric on $M_{\alpha,\beta}$. It was shown in \cite{go98}, that $\omega_{\mathrm{GO}}$ is locally conformally K\"ahler. Now define a non-negative $(1,1)$-form
\begin{align*}
\Theta = \frac{i\partial\Phi\wedge\bar\partial\Phi}{\Phi^2}.
\end{align*}
In this case, define $\hat\omega_s := (1-2s)\omega_{\mathrm{GO}}+2s\Theta$.

\begin{thm}\label{s1_Theorem2}
Suppose that $\omega = \hat\omega + i\partial\bar\partial\varphi$ solves \eqref{s1_ContinuityEquation} on $M_{\alpha,\beta}\times[0,\frac12)$ with $\omega_0 = \omega_{\mathrm{GO}} + i\partial\bar\partial\varphi_0$ and $\varphi$ normalized appropriately. Then
\begin{enumerate}
    \item $|\varphi|\leq C$
    \item $C^{-1}\hat\omega^2 \leq \omega^2 \leq C\hat\omega^2$
    \item $\omega \leq C\omega_{\mathrm{GO}}$
    \item $\frac{C^{-1}}{\sqrt{1-2s}} \leq R(\omega)\leq \frac{C}{1-2s}$.
\end{enumerate}
In particular, $\varphi$ converges subsequentually in $C^{1,\beta}$ as $s\to 1/2$.
\end{thm}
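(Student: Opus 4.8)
The plan is to follow the scheme behind Theorem~\ref{s1_Theorem1}: reduce the continuity equation to a scalar complex Monge--Amp\`ere equation and then extract the estimates by the maximum principle. The starting point is the identity
\[
\omega_{\mathrm{GO}} - \Theta = \frac{i\ddb\Phi}{\Phi} - \frac{i\partial\Phi\wedge\bar\partial\Phi}{\Phi^2} = i\ddb\log\Phi,
\]
which gives $\hat\omega_s = (1-2s)\omega_{\mathrm{GO}} + 2s\Theta = \omega_{\mathrm{GO}} - 2s\,i\ddb\log\Phi$. From the defining equation of $\Phi$ one checks that $\Phi\mapsto|\alpha||\beta|\,\Phi$ under $(z,w)\sim(\alpha z,\beta w)$, so that $\Omega := \Phi^{-2}\,\omega_{\mathrm{Eucl}}^2$ descends to a smooth volume form on $M_{\alpha,\beta}$ with $\Ric(\Omega) = 2i\ddb\log\Phi$; hence $\hat\omega_s = \omega_{\mathrm{GO}} - s\,\Ric(\Omega)$. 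Writing $\Ric(\omega) = \Ric(\Omega) - i\ddb\log(\omega^2/\Omega)$ and substituting into \eqref{s1_ContinuityEquation} yields $i\ddb\varphi = s\,i\ddb\log(\omega^2/\Omega)$, so after fixing the additive normalization of $\varphi$ the equation becomes $(\hat\omega_s + i\ddb\varphi)^2 = e^{\varphi/s}\,\Omega$. Since $\Theta\wedge\Theta = 0$, we have $\hat\omega_s^2 = (1-2s)^2\omega_{\mathrm{GO}}^2 + 4s(1-2s)\,\omega_{\mathrm{GO}}\wedge\Theta$, which is comparable to $(1-2s)\,\Omega$; this records the collapse of the reference metric as $s\to\tfrac12$ and must be tracked throughout.

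For estimate \textit{(1)}, I would write the equation as $\omega^2/\hat\omega_s^2 = e^{\varphi/s + F_s}$ with $F_s = \log(\Omega/\hat\omega_s^2)$. Although $F_s$ itself blows up like $-\log(1-2s)$, its oscillation over $M_{\alpha,\beta}$ is bounded uniformly in $s$, because the bracketed factor in $\hat\omega_s^2/\Omega = (1-2s)\big[(1-2s)\,\omega_{\mathrm{GO}}^2/\Omega + 4s\,\omega_{\mathrm{GO}}\wedge\Theta/\Omega\big]$ is bounded above and below independently of $s$. Normalizing $\varphi$ by the constant $s\cdot\mathrm{avg}(F_s)$ and applying the maximum principle at the extrema of $\varphi$ (where $i\ddb\varphi\lessgtr0$ forces $\omega^2\lessgtr\hat\omega_s^2$) then gives $|\varphi|\le C$ uniformly in $s$. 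Estimate \textit{(2)} is immediate: \textit{(1)} together with the equation yields $C^{-1}\le\omega^2/\hat\omega_s^2\le C$.

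The heart of the argument is estimate \textit{(3)}, equivalently a uniform upper bound on $\tr_{\omega_{\mathrm{GO}}}\omega$, and I expect this to be the main obstacle. I would apply the maximum principle to $Q = \log\tr_{\omega_{\mathrm{GO}}}\omega - A\varphi$ for a large constant $A$, differentiating the Monge--Amp\`ere equation and bounding $\Delta_\omega Q$ at its maximum, where $\Delta_\omega = \tr_\omega(i\ddb\,\cdot)$. Three difficulties enter, and all are resolved using the explicit geometry of $M_{\alpha,\beta}$: (a) the Chern connection of $\omega_{\mathrm{GO}}$ has non-vanishing torsion, so commuting covariant derivatives in $\Delta_\omega\log\tr_{\omega_{\mathrm{GO}}}\omega$ produces first-order torsion terms that must be absorbed; (b) the bisectional curvature of $\omega_{\mathrm{GO}}$ appears and must be controlled, which is possible via the conformally-K\"ahler structure $\omega_{\mathrm{GO}} = i\ddb\Phi/\Phi$; and (c) the reference $\hat\omega_s$ degenerates as $s\to\tfrac12$, so every term involving $\hat\omega_s$ and $\Theta$ must be estimated uniformly using $\hat\omega_s\le C\omega_{\mathrm{GO}}$ and \textit{(1)}, \textit{(2)}. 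The term $-A\varphi$ contributes, through the equation, the quantity $A\,\tr_\omega\hat\omega_s$, which one wishes to play against the torsion and curvature errors; the subtlety is precisely that $\hat\omega_s$ is collapsing, so this term is weak, and making it dominate requires exploiting the precise sign and structure of the curvature of $\omega_{\mathrm{GO}}$. This is the elliptic analogue of the Tosatti--Weinkove second-order estimate for the Chern--Ricci flow on Hopf surfaces.

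Granting \textit{(2)} and \textit{(3)}, estimate \textit{(4)} follows by tracing the equation: since $\Ric(\omega) = \tfrac1s(\omega_{\mathrm{GO}}-\omega)$,
\[
R(\omega) = \tr_\omega\Ric(\omega) = \frac1s\big(\tr_\omega\omega_{\mathrm{GO}} - 2\big).
\]
Let $\lambda_1,\lambda_2$ be the eigenvalues of $\omega$ with respect to $\omega_{\mathrm{GO}}$. By \textit{(2)} and $\hat\omega_s^2\sim(1-2s)\,\omega_{\mathrm{GO}}^2$ we get $\lambda_1\lambda_2 = \omega^2/\omega_{\mathrm{GO}}^2\sim(1-2s)$, while \textit{(3)} gives $\lambda_i\le C$. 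Writing $\tr_\omega\omega_{\mathrm{GO}} = (\lambda_1+\lambda_2)/(\lambda_1\lambda_2)$, the arithmetic--geometric mean inequality $\lambda_1+\lambda_2\ge2\sqrt{\lambda_1\lambda_2}$ yields the lower bound $\tr_\omega\omega_{\mathrm{GO}}\ge 2/\sqrt{\lambda_1\lambda_2}\ge C^{-1}(1-2s)^{-1/2}$, and $\lambda_i\le C$ yields the upper bound $\tr_\omega\omega_{\mathrm{GO}}\le C/(\lambda_1\lambda_2)\le C(1-2s)^{-1}$; substituting into the formula for $R(\omega)$ proves \textit{(4)}. Finally, the subsequential $C^{1,\beta}$ convergence follows from \textit{(1)} and \textit{(3)}: the identity $\Delta_{\omega_{\mathrm{GO}}}\varphi = \tr_{\omega_{\mathrm{GO}}}\omega - \tr_{\omega_{\mathrm{GO}}}\hat\omega_s$ together with $\tr_{\omega_{\mathrm{GO}}}\omega\le C$ and $\hat\omega_s\le C\omega_{\mathrm{GO}}$ shows that the Chern Laplacian of $\varphi$ is bounded uniformly in $s$; combined with $|\varphi|\le C$, the $L^p$ elliptic estimates give a uniform $W^{2,p}$ bound for every $p<\infty$, hence a uniform $C^{1,\beta}$ bound, and Arzel\`a--Ascoli produces a convergent subsequence as $s\to\tfrac12$.
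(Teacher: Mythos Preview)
Your treatment of parts \textit{(1)}, \textit{(2)}, \textit{(4)}, and the $C^{1,\beta}$ convergence matches the paper's approach. Two minor points: you dropped $\varphi_0$ in the scalar reduction (it should read $i\ddb(\varphi-\varphi_0) = s\,i\ddb\log(\omega^2/\Omega)$), and your volume form $\Omega = \Phi^{-2}\omega_{\mathrm{Eucl}}^2$ is exactly $\chi^2$ for the paper's auxiliary metric $\chi_{\bar kj} = \Phi^{-2\gamma_j}\delta_{\bar kj}$ (since $\gamma_1+\gamma_2=1$). The paper also uses the exact identity $\hat\omega^2 = (1-2s)\,\omega_{\mathrm{GO}}^2$ (from $\Theta^2=0$ and $2\,\omega_{\mathrm{GO}}\wedge\Theta=\omega_{\mathrm{GO}}^2$), so that $f=\log(\omega_{\mathrm{GO}}^2/\chi^2)$ is $s$-independent and your bounded-oscillation manoeuvre is unnecessary.

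The substantive difference is in part \textit{(3)}, which you leave as a sketch. You propose $\log\tr_{\omega_{\mathrm{GO}}}\omega - A\varphi$ and worry that $-A\Delta_\omega\varphi = A\tr_\omega\hat\omega_s - 2A$ is ``weak'' because $\hat\omega_s$ collapses. The paper (following Edwards, not Tosatti--Weinkove, who handle only the round case) sidesteps this by using $\chi$, not $\omega_{\mathrm{GO}}$, as reference, with test function $\tr_\chi\omega - A\varphi$ (no logarithm). Two features of $\chi$ are decisive. First, its diagonal form gives the exact curvature identity $g^{j\bar k}R_{\bar kj}{}^{p\bar q}(\chi)\,g_{\bar qp} = \gamma_p\,\chi^{p\bar p}g_{\bar pp}\,\tr_\omega\Ric(\chi) \geq C^{-1}\tr_\chi\omega\cdot\tr_\omega\Ric(\chi)$, while the remaining curvature and torsion terms in \eqref{s2_LaplacianTrace} are $\geq -C\tr_\omega\omega_{\mathrm{GO}}$. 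Second, writing $\hat\omega_s = \omega_{\mathrm{GO}} - s\Ric(\chi)$ gives $\Delta_\omega\varphi = 2 - \tr_\omega\omega_{\mathrm{GO}} + s\tr_\omega\Ric(\chi)$, so $-A\Delta_\omega\varphi$ produces $A\tr_\omega\omega_{\mathrm{GO}}$, which is \emph{not} collapsing and absorbs the $-C\tr_\omega\omega_{\mathrm{GO}}$ error once $A>C$, together with $-As\,\tr_\omega\Ric(\chi)$ with $s<\tfrac12$. Combining yields $\big(\tfrac{1}{C}\tr_\chi\omega - \tfrac{A}{2}\big)\tr_\omega\Ric(\chi) - \tr_\chi\Ric(\omega) - 2A$; since $\Ric(\chi)\geq 0$, at the maximum either $\tr_\chi\omega$ is already bounded or the first term is nonnegative, and substituting $-\Ric(\omega)=(\omega-\omega_0)/s$ finishes. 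So the collapse of $\hat\omega_s$ is a red herring once one splits it as $\omega_{\mathrm{GO}}-s\Ric(\chi)$, and the structure to exploit is that of the metric $\chi$, not merely its volume form $\Omega$ or the metric $\omega_{\mathrm{GO}}$.
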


\noindent Parts \textit{(1)} and \textit{(3)} reflect  the behavior of solutions to the Chern-Ricci flow as shown by Edwards in \cite{edwards21}. However, the volume bound in \textit{(2)} and the Chern scalar curvature bound in \textit{(4)} for the continuity equation are not known for solutions to the Chern-Ricci flow.

On Inoue surfaces, we now consider the normalized continuity equation given by
\begin{align}\label{s1_NormalizedContinuityEquation}
    \omega = \omega_0 - s\Ric(\omega)-s\omega.
\end{align}
An Inoue surface $S_M$ is the quotient of $\mathbb C \times \mathbb H$ by the relations $(\mu z,\lambda w) \sim (z,w)$ and $(z + m_j, w + \ell_j) \sim (z,w)$ where $M$ is a given matrix in $SL_3(\mathbb Z)$ and $Mm = \mu m$, $M\ell = \lambda\ell$ Let $y = \Imm w$. Let $\omega'$ be a Hermitian metric which is either Gauduchon or strongly flat along the leaves in the sense of \cite{ftwz16, ta21}. We prove the following result in Section \ref{s5}:

\begin{thm}\label{s1_Theorem3}
Suppose that $\omega = \hat\omega + i\partial\bar\partial\varphi$ solves \eqref{s1_NormalizedContinuityEquation} on $S_M\times[0,\infty)$ with $\omega_0 = \omega' + i\partial\bar\partial\psi$ and $\varphi$ normalized appropriately. Then
\begin{enumerate}
    \item $|\varphi| \leq C/(s+1)$
    \item $e^{-C/s}\hat\omega^2 \leq \omega^2 \leq e^{C/s}\hat\omega^2$
    \item $(1-C(s+1)^{-1/4})\hat\omega \leq \omega \leq (1+C(s+1)^{-1/4})\hat\omega$
    \item $-C\omega \leq \Ric(\omega) \leq C\omega$
\end{enumerate}
In particular, $(S_M,\omega)$ converges to a circle $S^1$ in the Gromov-Hausdorff sense as $s\to\infty$.
\end{thm}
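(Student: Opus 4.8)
The plan is to establish the four a priori estimates for the normalized continuity equation on the Inoue surface, following the general strategy that has proved effective for the Hopf cases, but adapted to handle both the normalization term $-s\omega$ and the noncompact fundamental-domain structure arising from the $\mathbb{C}\times\mathbb{H}$ quotient. The normalization is chosen precisely so that the equation admits a stationary background $\hat\omega$ as $s\to\infty$ rather than a degenerating family, which is what makes Gromov-Hausdorff convergence to a fixed limit plausible. I would first fix the reference metric $\hat\omega$ explicitly in the invariant coordinates (using $y=\Imm w$ and the leaf structure), compute its Chern-Ricci curvature, and verify that the continuity equation \eqref{s1_NormalizedContinuityEquation} reduces to a complex Monge-Amp\`ere-type equation $(\hat\omega+i\partial\bar\partial\varphi)^2 = e^{F + \varphi/s + \cdots}\,\hat\omega^2$ for an appropriate error term $F$; the exponential volume bounds in \textit{(2)} already signal that the natural estimates will be in terms of $1/s$.

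The core analytic work proceeds in the standard order. First I would prove the $C^0$ estimate \textit{(1)}, $|\varphi|\le C/(s+1)$, by applying the maximum principle to the scalar equation: at a maximum of $\varphi$ one has $i\partial\bar\partial\varphi\le 0$, and the $+\varphi/s$-type term coming from the normalization $-s\omega$ furnishes the crucial sign that forces $\varphi$ to decay like $1/s$ rather than merely staying bounded. Second, I would derive the volume (second-order trace-type) estimate \textit{(2)}; here the exponential factor $e^{\pm C/s}$ suggests that the determinant estimate follows from feeding the $C^0$ bound into the Monge-Amp\`ere equation and controlling the error $F$ on the fundamental domain. Third, the full metric equivalence \textit{(3)}, $(1-C(s+1)^{-1/4})\hat\omega\le\omega\le(1+C(s+1)^{-1/4})\hat\omega$, is the genuine second-order estimate: I would apply the maximum principle to a quantity of the form $\log\tr_{\hat\omega}\omega - A\varphi$ (a Chern-Laplacian/parabolic Schwarz-lemma computation), using the torsion and curvature of $\hat\omega$ peculiar to the Inoue geometry, and then combine with \textit{(2)} to pinch both eigenvalues toward $1$ at the stated rate. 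The Ricci two-sided bound \textit{(4)} should then follow from \textit{(3)} together with the differentiated equation.

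The hard part will be the second-order estimate \textit{(3)} and, in particular, extracting the precise decay rate $(s+1)^{-1/4}$. Unlike the Hopf case, the Inoue surface has no nonzero holomorphic vector fields and a more rigid, solvable-group geometry, so the reference metric $\hat\omega$ carries nontrivial torsion; the Chern-Laplacian of $\tr_{\hat\omega}\omega$ therefore generates torsion and curvature terms that must be absorbed, and it is the interplay between these terms and the normalization that produces the fractional exponent rather than a clean exponential or polynomial rate. I anticipate that the sharp rate comes from balancing the first-order gradient terms (controlled via the $C^0$ bound and interpolation) against the zeroth-order normalization term, which is why the exponent is $1/4$ rather than $1/2$.

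Once the estimates \textit{(1)}--\textit{(4)} are in hand, the Gromov-Hausdorff statement follows by showing that the diameters in the leaf (fiber) directions collapse while the transverse ($y$-)direction survives. Concretely, estimate \textit{(3)} shows $\omega$ is uniformly comparable to $\hat\omega$, and one then analyzes the geometry of $\hat\omega$ itself: the fibers of the natural fibration $S_M\to S^1$ have bounded diameter that one must show shrinks, while the base circle retains a fixed length. I would construct the Gromov-Hausdorff approximation explicitly via the projection to the $S^1$-factor, verify it is an $\varepsilon$-isometry with $\varepsilon\to 0$ as $s\to\infty$ using the pinching in \textit{(3)}, and conclude $(S_M,\omega)\xrightarrow{\GH} S^1$. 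The subsequential $C^{1,\beta}$ convergence of $\varphi$, if needed as an intermediate step, follows from \textit{(1)}--\textit{(3)} by the usual Evans-Krylov plus Schauder bootstrap on the fundamental domain.
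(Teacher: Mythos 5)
Your outline follows the same strategy as the paper's proof: a maximum-principle $C^0$ estimate on the scalar Monge--Amp\`ere equation (where the coefficient $(s+1)/s$ in front of $\varphi$, coming from the normalization term $-s\omega$, is exactly what forces the $1/(s+1)$ decay), the volume bound read off directly from that scalar equation, a Schwarz-lemma-type second-order estimate, and then the Ricci bound and collapse to $S^1$. Two places where your plan diverges from, or is weaker than, what is actually needed. First, for part \textit{(4)} you invoke ``the differentiated equation''; no differentiation is required. The continuity equation gives $\Ric(\omega) = (\omega_0 - (s+1)\omega)/s$ algebraically, so the two-sided bound is immediate from $\omega_0 \leq C(s+1)\hat\omega \leq C(s+1)\omega$ together with the metric equivalence --- this is precisely the advertised advantage over the Chern--Ricci flow, where a curvature bound would require third-order estimates. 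Second, the exponent $1/4$ does not come from a gradient-versus-normalization balance. The paper runs the second-order estimate in two passes: first a uniform bound $C^{-1}\hat\omega \leq \omega \leq C\hat\omega$ using the test function $\log\tr_{\hat\omega}\omega - A\sqrt{s+1}\,\varphi + 1/\psi$ with $\psi = \sqrt{s+1}\,\varphi + C$, where the auxiliary $1/\psi$ term supplies the positive gradient term $\frac{2(s+1)}{\psi^3}|\partial\varphi|^2_\omega$ needed to absorb the torsion cross-term $2\Ree\, g^{j\bar k}(\partial_j\tr_{\hat\omega}\omega/\tr_{\hat\omega}\omega)\bar T_k$ in Cherrier's formula; your quantity $\log\tr_{\hat\omega}\omega - A\varphi$ alone does not close, so some such device must be specified. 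Then a second pass with $\tr_{\hat\omega}\omega - A\varphi$ and $s$-dependent $A$ gives $\tr_{\hat\omega}\omega - 2 \leq CA/(s+1) + C\sqrt{s+1}/A + C(s+1)^{-1/4}$, where $\sqrt{s+1}$ measures the curvature of the collapsing reference metric $\hat\omega$ and $1/(s+1)$ is the oscillation of $\varphi$ from part \textit{(1)}; optimizing $A = (s+1)^{3/4}$ yields the $(s+1)^{-1/4}$ trace decay. (Note that after the eigenvalue lemma of Tosatti--Weinkove--Yang the metric pinching one actually obtains is at rate $(s+1)^{-1/8}$, which is what the paper's Proposition \ref{s5_C2DecayEstimate} records, even though the theorem statement advertises $(s+1)^{-1/4}$.) With these two corrections your plan coincides with the paper's argument; the Gromov--Hausdorff step, which the paper defers to the Inoue-surface analysis of Fang--Tosatti--Weinkove--Zheng, is as you describe.
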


\noindent The above result is analogous to the behavior of the evolving metric $\omega(t)$ along the normalized Chern-Ricci flow as shown by Fang-Tosatti-Weinkove-Zheng in \cite{ftwz16}, and reflects the structure of Inoue surfaces as bundles over $S^1$. An improvement in using the continuity equation is that we are able to obtain a uniform Chern Ricci curvature bound as in \textit{(4)} without the use of higher order estimates. The results carry over straightforwardly to the Inoue surfaces of type $S^+$ and $S^-$, as in Section 3 of \cite{ftwz16}. In \cite{ta21}, Tosatti-Angella prove in fact all higher order estimates for the evolving metric along the Chern-Ricci flow as well as uniform curvature bounds on the evolving metric. 

We organize the paper as follows. In Section \ref{s2}, we cover some notation and state some general estimates that will be used in the later sections. In Section \ref{s3} we prove Theorem \ref{s1_Theorem1}, followed by Theorem \ref{s1_Theorem2} in Section \ref{s4}. Lastly, we prove Theorem \ref{s1_Theorem3} in \ref{s5}. \\

\noindent {\bf Acknowledgments:} We would like to thank Ben Weinkove, D. H. Phong, Gregory Edwards and Valentino Tosatti for useful conversations and suggestions.
\section{Preliminaries}\label{s2}

\noindent To a Hermitian metric $\chi$ we associate the Chern connection $\nabla = \partial + \Gamma$ defined by
\begin{align*}
\Gamma^p_{jr} = \chi^{p\bar q}\partial_j\chi_{\bar qr}
\end{align*}
the torsion $T$ defined by
\begin{align*}
T^p_{jr} = \Gamma^p_{jr} - \Gamma^p_{rj}
\end{align*}
\begin{align*}
\tau_j = T^p_{jp}
\end{align*}
and the curvature $R$ defined by
\begin{align*}
R_{\bar kj}{}^p{}_r = -\partial_{\bar k}\Gamma^p_{jr}.
\end{align*}
If $\omega$ is another Hermitian metric with $d(\omega - \chi) = 0$, by a computation due to Cherrier \cite{cherrier}
\begin{align}\label{s2_LaplacianTrace}
\Delta_\omega\tr_\chi\omega
& = -\chi^{p\bar q}R_{\bar qp}(\omega) + g^{j\bar k}R_{\bar kj}{}^{p\bar q}g_{\bar qp} \\
& \ \ \ \, + g^{j\bar k}R^p{}_{p\bar kj} - g^{j\bar k}R_{\bar kj}{}^p{}_p - g^{j\bar k}\chi^{p\bar q}\chi_{\bar sr}T^r_{pj}\bar T^s_{qk} \nonumber \\
& \ \ \ \, + g^{j\bar k}\chi^{p\bar q}g_{\bar sr}\Phi^r_{pj}\bar\Phi^s_{qk} \nonumber
\end{align}
and 
\begin{align}\label{s2_LaplacianLogTrace}
\Delta_\omega\log\tr_\chi\omega
& = \frac{1}{\tr_\chi\omega}\bigg\{-\chi^{p\bar q}R_{\bar qp}(\omega) + g^{j\bar k}R_{\bar kj}{}^{p\bar q}g_{\bar qp} \\
& \ \ \ \ \ \ \ \ \ \ \ \ \, + g^{j\bar k}R^p{}_{p\bar kj} - g^{j\bar k}R_{\bar kj}{}^p{}_p - g^{j\bar k}\chi^{p\bar q}\chi_{\bar sr}T^r_{pj}\bar T^s_{qk} \nonumber \\
& \ \ \ \ \ \ \ \ \ \ \ \ \, + 2\Ree g^{j\bar k}\frac{\partial_j\tr_\chi\omega}{\tr_\chi\omega}\bar T_k + \frac{1}{\tr_\chi\omega}g^{j\bar k}T_j\bar T_k \nonumber \\
& \ \ \ \ \ \ \ \ \ \ \ \ \, + g^{j\bar k}\chi^{p\bar q}g_{\bar sr}\Phi^r_{pj}\bar\Phi^s_{qk} - \frac{1}{\tr_\chi\omega}g^{j\bar k}\Psi_j\bar\Psi_k\bigg\} \nonumber
\end{align}
where $\Phi^r_{pj} = g^{r\bar s} \nabla_pg_{\bar sj} + T^r_{pj}$ and $\Psi_j = \partial_j\tr_\chi\omega + T_j$. We have in \eqref{s2_LaplacianTrace}
\begin{align*}
g^{j\bar k}\chi^{p\bar q}g_{\bar sr}\Phi^r_{pj}\bar\Phi^s_{qk} \geq 0
\end{align*}
and by the methods used in the proof of \cite[Theorem 2.1]{tw09-2} (see also \cite[Theorem 3]{smith20}) we have in \eqref{s2_LaplacianLogTrace}
\begin{align*}
g^{j\bar k}\chi^{p\bar q}g_{\bar sr}\Phi^r_{pj}\bar\Phi^s_{qk} - \frac{1}{\tr_\chi\omega}g^{j\bar k}\Psi_j\bar\Psi_k \geq 0.
\end{align*}

\section{Hopf manifolds}\label{s3}

\noindent In this section we study equation \eqref{s1_ContinuityEquation} on round Hopf manifolds of complex dimension $n$. Specifically, let $X$ be the quotient of $\mathbb C^n\setminus\{0\}$ by the relation $(\alpha_1z_1,\dotsc,\alpha_nz_n)\sim(z_1,\dotsc,z_n)$, where $|\alpha_1|=\cdots=|\alpha_n| \neq 1$.

The Hopf metric $\omega_{\mathrm{H}}$ is defined by
\begin{align*}
g^{\mathrm{H}}_{\bar kj} = \frac{\delta_{\bar kj}}{r^2}.
\end{align*}
It follows that the Ricci curvature of $\omega_{\mathrm{H}}$ is given by
\begin{align*}
R^{\mathrm{H}}_{\bar kj} = n\frac{\delta_{\bar kj}}{r^2} - n\frac{\bar z_j z_k}{r^4}.
\end{align*}
By the Cauchy-Schwarz inequality, $\Ric(\omega_{\mathrm{H}})\geq 0$.

As in \cite{tw15}, we consider the family of Hermitian metrics $\hat\omega = \omega_{\mathrm{H}} - s\Ric(\omega_{\mathrm{H}})$, or
\begin{align*}
\hat g_{\bar kj} = (1-ns)\frac{\delta_{\bar kj}}{r^2} + ns\frac{\bar z_j z_k}{r^4}.
\end{align*}
It satisfies
\begin{align}\label{s3_OmegaHatDeterminant}
\hat\omega^n = (1-ns)^{n-1}\omega_{\mathrm{H}}^n.
\end{align}
Hence $\Ric(\hat\omega) = \Ric(\omega_{\mathrm{H}})$, and so $\hat\omega$ gives an explicit solution of equation \eqref{s1_ContinuityEquation} with $\omega_0 = \omega_{\mathrm{H}}$.

Let $\varphi_0$ be a given smooth function with $\omega_0 = \omega_{\mathrm{H}} + i\ddb\varphi_0 > 0$. By \cite[Theorem 1]{sw20}, the equation \eqref{s1_ContinuityEquation} admits a unique solution on $X\times[0,\frac{1}{n})$, which is necessarily of the form $\omega = \hat\omega + i\ddb\varphi$.

We start by proving parts \textit{(1)} and \textit{(2)} of Theorem \ref{s1_Theorem1}:

\begin{proposition}\label{s3_C0Estimate}
Suppose that $\omega = \hat\omega + i\partial\bar\partial\varphi$ solves \eqref{s1_ContinuityEquation} on $X\times[0,\frac{1}{n})$ with $\omega_0 = \omega_{\mathrm{H}} + i\partial\bar\partial\varphi_0$, subject to the normalization
\begin{align}\label{s3_Normalization}
\int_Xe^\frac{\varphi - \varphi_0}{s}\hat\omega^n = \int_X\omega^n.
\end{align}
Then $|\varphi|\leq C$ and furthermore $C^{-1}\leq\omega^n/\hat\omega^n\leq C$.
\end{proposition}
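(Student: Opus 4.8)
The plan is to convert the continuity equation into a scalar complex Monge--Ampère equation for $\varphi$ and then read off both conclusions from the maximum principle.

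\emph{Reduction to a Monge--Ampère equation.} Since $\hat\omega=\omega_{\mathrm H}-s\Ric(\omega_{\mathrm H})$ and $\Ric(\hat\omega)=\Ric(\omega_{\mathrm H})$, the form $\hat\omega$ itself solves \eqref{s1_ContinuityEquation} with initial datum $\omega_{\mathrm H}$. Subtracting that relation from the one solved by $\omega$, and using $\omega-\hat\omega=i\ddb\varphi$, $\omega_0-\omega_{\mathrm H}=i\ddb\varphi_0$, and $\Ric(\omega)-\Ric(\hat\omega)=-i\ddb\log(\omega^n/\hat\omega^n)$, I would obtain $i\ddb\bigl(\varphi-\varphi_0-s\log(\omega^n/\hat\omega^n)\bigr)=0$. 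On the compact manifold $X$ a real function annihilated by $i\ddb$ is $\Delta_\omega$-harmonic, hence constant, and the normalization \eqref{s3_Normalization} forces this constant to vanish. This yields
\begin{equation*}
(\hat\omega+i\ddb\varphi)^n=e^{(\varphi-\varphi_0)/s}\,\hat\omega^n. \tag{$\star$}
\end{equation*}

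\emph{The $C^0$ estimate.} At a point where $\varphi$ attains its maximum one has $i\ddb\varphi\le 0$, so $\omega\le\hat\omega$ and $\omega^n\le\hat\omega^n$ there; by $(\star)$ the exponent is then $\le 0$, giving $\varphi\le\varphi_0$ at that point and hence $\max_X\varphi\le\max_X\varphi_0$. The symmetric argument at a minimum gives $\min_X\varphi\ge\min_X\varphi_0$. As $\varphi_0$ is fixed, this produces $|\varphi|\le C$ with $C$ independent of $s$. Substituting back into $(\star)$ already gives $\omega^n/\hat\omega^n=e^{(\varphi-\varphi_0)/s}\in[e^{-C/s},e^{C/s}]$, a two-sided bound that is uniform for $s$ bounded away from $0$, and in particular controls the volume ratio as $s\to 1/n$.

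\emph{The volume ratio for small $s$.} It remains to bound $e^{(\varphi-\varphi_0)/s}$ uniformly as $s\to 0$, which I would do by applying the maximum principle to $H:=(\varphi-\varphi_0)/s$. At a maximum of $H$ we have $i\ddb(\varphi-\varphi_0)\le 0$, so $\omega\le\hat\omega+i\ddb\varphi_0=\omega_0-s\Ric(\omega_{\mathrm H})\le\omega_0$, using $\Ric(\omega_{\mathrm H})\ge 0$; hence $\omega^n/\hat\omega^n\le\omega_0^n/\hat\omega^n=(\omega_0^n/\omega_{\mathrm H}^n)(1-ns)^{-(n-1)}$, which is bounded for $s$ away from $1/n$. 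At a minimum of $H$ we have $\omega\ge\omega_0-s\Ric(\omega_{\mathrm H})$; invoking the pointwise bound $0\le\Ric(\omega_{\mathrm H})\le n\omega_{\mathrm H}$ (immediate from the explicit formula for $R^{\mathrm H}_{\bar kj}$) together with $\omega_0\ge c_0\omega_{\mathrm H}$ shows that for small $s$ this lower barrier satisfies $\omega_0-s\Ric(\omega_{\mathrm H})\ge(c_0-ns)\omega_{\mathrm H}>0$, whence $\omega^n/\hat\omega^n\ge(c_0-ns)^n(1-ns)^{-(n-1)}$ is bounded below. Splitting $[0,1/n)$ into a neighborhood of $0$, handled here, and its complement, handled by the previous paragraph, then gives $C^{-1}\le\omega^n/\hat\omega^n\le C$ uniformly.

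\emph{Main obstacle.} The genuinely delicate point is uniformity across the \emph{entire} interval. The maximum-principle barriers above degenerate like $(1-ns)^{-(n-1)}$ as $s\to 1/n$, whereas the crude bound $e^{\pm C/s}$ from the $C^0$ estimate degenerates as $s\to 0$, so neither argument alone suffices and the two must be matched on overlapping ranges of $s$. A related subtlety is that the lower barrier $\omega_0-s\Ric(\omega_{\mathrm H})$ is only positive-definite for small $s$, which is precisely why the maximum-principle bound is used near $s=0$ and the $C^0$-based bound near $s=1/n$.
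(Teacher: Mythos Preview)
Your argument is correct and matches the paper's: derive the scalar equation $(\star)$ from the normalization and apply the maximum principle to $\varphi$. The paper is briefer on the volume ratio---it simply cites $(\star)$ together with the $C^0$ bound, which gives $\omega^n/\hat\omega^n=e^{(\varphi-\varphi_0)/s}$ and hence uniformity for $s$ bounded away from $0$ (in particular as $s\to 1/n$)---whereas your additional maximum-principle argument on $(\varphi-\varphi_0)/s$ near $s=0$ is a legitimate refinement that the paper does not spell out.
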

\begin{proof}
The forms of $\omega$, $\omega_0$ and $\hat\omega$ show that the equation \eqref{s1_ContinuityEquation} is
\begin{align*}
i\ddb\log\frac{\omega^n}{\hat\omega^n} = i\partial\bar\partial\frac{\varphi - \varphi_0}{s}.
\end{align*}
The normalization \eqref{s3_Normalization} implies that $\varphi$ solves the following scalar equation
\begin{align}\label{s3_ScalarEquation}
\log\frac{(\hat\omega + i\partial\bar\partial\varphi)^n}{\hat\omega^n} = \frac{\varphi-\varphi_0}{s}.
\end{align}
Now at a maximum point $x_0$ of $\varphi$ we have $i\partial\bar\partial\varphi(x_0)\leq 0$, so that $(\hat\omega(x_0) + i\partial\bar\partial\varphi(x_0))^n \leq\hat\omega(x_0)^n$. Thus by \eqref{s3_ScalarEquation}
\begin{align*}
\frac{\varphi(x_0) - \varphi_0(x_0)}{s} \leq 0
\end{align*}
or
\begin{align*}
\varphi \leq \|\varphi_0\|_{C^0}.
\end{align*}
A similar argument gives a lower bound on $\varphi$. The second estimate now follows from the first one and \eqref{s3_ScalarEquation}.
\end{proof}

\noindent Now we can prove part \textit{(3)} of Theorem \ref{s1_Theorem1}:

\begin{proposition}\label{s3_C2Estimate}
Suppose that $\omega = \hat\omega + i\partial\bar\partial\varphi$ solves \eqref{s1_ContinuityEquation} on $X\times[0,\frac{1}{n})$ with $\omega_0 = \omega_{\mathrm{H}} + i\partial\bar\partial\varphi_0$. Then $\tr_{\omega_{\mathrm{H}}}\omega\leq C$ and hence $\omega\leq C\omega_{\mathrm{H}}$.
\end{proposition}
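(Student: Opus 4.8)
The plan is to prove this Laplacian (second–order) estimate by a maximum principle applied to
$Q = \log\tr_{\omega_{\mathrm{H}}}\omega - A\varphi$
for a large constant $A$ to be chosen, where $|\varphi|\le C$ by Proposition~\ref{s3_C0Estimate}; since $\tr_{\omega_{\mathrm{H}}}\omega$ controls the largest eigenvalue of $\omega$ relative to $\omega_{\mathrm{H}}$, an upper bound on $Q$ is equivalent to the desired bound $\omega\le C\omega_{\mathrm{H}}$. Because $\omega-\omega_{\mathrm{H}} = s\,i\ddb\log\det g^{\mathrm{H}} + i\ddb\varphi$ is $d$-exact, the reference $\chi=\omega_{\mathrm{H}}$ satisfies $d(\omega-\chi)=0$ and I may apply the Cherrier-type identity \eqref{s2_LaplacianLogTrace}.

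First I would eliminate the Chern–Ricci term of $\omega$ using the equation itself: from \eqref{s1_ContinuityEquation}, $\Ric(\omega)=\tfrac1s(\omega_0-\omega)$, so $-\chi^{p\bar q}R_{\bar qp}(\omega)=\tfrac1s\big(\tr_{\omega_{\mathrm{H}}}\omega-\tr_{\omega_{\mathrm{H}}}\omega_0\big)$, which is manifestly controlled. The crucial structural input is that $\omega_{\mathrm{H}}$ is homogeneous, so its curvature and torsion are explicit, and in particular its bisectional contraction is nonnegative: a direct computation gives $R_{\bar kj}{}^{p\bar q}(\omega_{\mathrm{H}}) = \delta^{p\bar q}\big(\delta_{\bar kj}-\bar z^j z^k/r^2\big)$, whence $g^{j\bar k}R_{\bar kj}{}^{p\bar q}g_{\bar qp} = \big[g^{j\bar k}(\delta_{\bar kj}-\bar z^j z^k/r^2)\big]\big[\sum_p g_{\bar pp}\big]\ge 0$, since $\bar z^j z^k/r^2\le\delta_{\bar kj}$ as Hermitian matrices. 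Thus the most dangerous second–order term in \eqref{s2_LaplacianLogTrace} has a favorable sign and may be discarded.

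The remaining curvature and torsion terms of $\omega_{\mathrm{H}}$ (the Ricci contractions $R^p{}_{p\bar kj}$, $R_{\bar kj}{}^p{}_p$ and the torsion–squared term, with $T^p_{jr}=r^{-2}(\delta^p_j\bar z^r-\delta^p_r\bar z^j)$ and $|\tau|^2_{\omega_{\mathrm{H}}}=(n-1)^2$) are bounded in absolute value by $C\tr_\omega\omega_{\mathrm{H}}$. The first–order torsion terms I would absorb using the nonnegative combination $g^{j\bar k}\chi^{p\bar q}g_{\bar sr}\Phi^r_{pj}\bar\Phi^s_{qk}-\tfrac{1}{\tr_\chi\omega}g^{j\bar k}\Psi_j\bar\Psi_k\ge 0$ recorded in Section~\ref{s2}, together with Cauchy–Schwarz and the critical point relation $\partial_j\tr_{\omega_{\mathrm{H}}}\omega = A\,\tr_{\omega_{\mathrm{H}}}\omega\,\partial_j\varphi$. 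Against these I would play the barrier contribution $-A\Delta_\omega\varphi = A\tr_\omega\hat\omega - An$ coming from $i\ddb\varphi=\omega-\hat\omega$; taking $A$ large relative to $C$ forces $\tr_\omega\omega_{\mathrm{H}}\le C'$ at a maximum of $Q$. I would then convert this into the statement via the elementary inequality $\tr_{\omega_{\mathrm{H}}}\omega\le\tfrac{1}{(n-1)!}(\tr_\omega\omega_{\mathrm{H}})^{n-1}\,\omega^n/\omega_{\mathrm{H}}^n$ combined with the volume bound $\omega^n/\hat\omega^n\le C$ of Proposition~\ref{s3_C0Estimate} and the identity \eqref{s3_OmegaHatDeterminant}, which bound $\tr_{\omega_{\mathrm{H}}}\omega$ at the maximum of $Q$, hence bound $Q$, hence bound $\tr_{\omega_{\mathrm{H}}}\omega$ everywhere.

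The hard part will be obtaining a constant uniform as $s\to 1/n$. The barrier supplies only $A\tr_\omega\hat\omega\ge A(1-ns)\tr_\omega\omega_{\mathrm{H}}$, while the bad terms generated by the fixed metric $\omega_{\mathrm{H}}$ are comparable to $\tr_\omega\omega_{\mathrm{H}}$ with $s$-independent constants, so the naive choice of $A$ costs a factor $(1-ns)^{-1}$ as $\hat\omega$ degenerates in the tangential directions. I expect the resolution to exploit the extra positivity already visible in $-\tr_\omega\Ric(\omega_{\mathrm{H}})=\tfrac1s\big(\tr_\omega\hat\omega-\tr_\omega\omega_{\mathrm{H}}\big)$, whose good part $\tfrac1s\tr_\omega\hat\omega$ reinforces the barrier, and/or in running the identical argument with reference $\hat\omega$ in place of $\omega_{\mathrm{H}}$ (which still yields $\omega\le C\omega_{\mathrm{H}}$ since $\hat\omega\le\omega_{\mathrm{H}}$), so that the reference metric appearing in the bad terms and in $-A\Delta_\omega\varphi$ coincides and the degenerate factor cancels.
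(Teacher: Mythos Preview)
Your approach follows the standard Aubin--Yau barrier method, and you have correctly identified its weakness: the barrier contributes only $A\tr_\omega\hat\omega \ge A(1-ns)\tr_\omega\omega_{\mathrm{H}}$, so absorbing a bad term of order $\tr_\omega\omega_{\mathrm{H}}$ forces $A\sim(1-ns)^{-1}$ and the final bound degenerates as $s\to 1/n$. Your proposed fixes do not close this gap: taking $\hat\omega$ as reference trades the problem for blowing-up curvature and torsion of $\hat\omega$, and the term $\tfrac{1}{s}\tr_\omega\hat\omega$ you point to does not appear once you have discarded the bisectional term as merely nonnegative. As written, the argument is incomplete precisely at the point that matters for Theorem~\ref{s1_Theorem1}.

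The paper's proof avoids the barrier entirely by computing the lower-order terms \emph{exactly} rather than bounding them by $C\tr_\omega\omega_{\mathrm{H}}$. For the Hopf metric one has the identity
\[
g^{j\bar k}R^p{}_{p\bar kj} - g^{j\bar k}R_{\bar kj}{}^p{}_p - g^{j\bar k}g_{\mathrm{H}}^{p\bar q}g^{\mathrm{H}}_{\bar sr}T^r_{pj}\bar T^s_{qk} \;=\; -\tfrac{2}{n}\tr_\omega\Ric(\omega_{\mathrm{H}}),
\]
while the bisectional term is exactly $\tfrac{1}{n}\tr_{\omega_{\mathrm{H}}}\omega\cdot\tr_\omega\Ric(\omega_{\mathrm{H}})$. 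Substituting into the non-logarithmic identity \eqref{s2_LaplacianTrace} (so the first-order torsion cross terms of \eqref{s2_LaplacianLogTrace} never arise) gives
\[
\Delta_\omega\tr_{\omega_{\mathrm{H}}}\omega \;\ge\; \Big(\tfrac{\tr_{\omega_{\mathrm{H}}}\omega}{n} - \tfrac{2}{n}\Big)\tr_\omega\Ric(\omega_{\mathrm{H}}) \;-\; \tr_{\omega_{\mathrm{H}}}\Ric(\omega).
\]
At a maximum of $\tr_{\omega_{\mathrm{H}}}\omega$ itself, either $\tr_{\omega_{\mathrm{H}}}\omega<2$, or the first term is nonnegative since $\Ric(\omega_{\mathrm{H}})\ge 0$; then \eqref{s1_ContinuityEquation} yields $0\ge\tfrac{1}{s}\tr_{\omega_{\mathrm{H}}}(\omega-\omega_0)$, i.e.\ $\tr_{\omega_{\mathrm{H}}}\omega\le\|\tr_{\omega_{\mathrm{H}}}\omega_0\|_{C^0}$, uniformly in $s$ and without using Proposition~\ref{s3_C0Estimate} at all. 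The missing idea in your plan is that the ``bad'' lower-order terms and the ``good'' bisectional term are constant multiples of the \emph{same} quantity $\tr_\omega\Ric(\omega_{\mathrm{H}})$, so the former is absorbed by the latter rather than by a barrier.
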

\begin{proof}
The proof follows \cite{tw15}. Compute
\begin{align*}
g^{j\bar k}R_{\bar kj}{}^{p\bar q}g_{\bar qp} = \frac{1}{n}\tr_{\omega_{\mathrm{H}}}\omega\tr_\omega\Ric(\omega_{\mathrm{H}})
\end{align*}
and
\begin{align*}
g^{j\bar k}R^p{}_{p\bar kj} - g^{j\bar k}R_{\bar kj}{}^p{}_p - g^{j\bar k}g_{\mathrm{H}}^{p\bar q}g^{\mathrm{H}}_{\bar sr}T^r_{pj}\bar T^s_{qk} = -\frac{2}{n}\tr_\omega\Ric(\omega_{\mathrm{H}}).
\end{align*}
Therefor by \eqref{s2_LaplacianTrace}
\begin{align}\label{s3_LaplacianTrace}
\Delta_\omega\tr_{\omega_{\mathrm{H}}}\omega \geq \left(\frac{\tr_{\omega_{\mathrm{H}}}\omega}{n} - \frac{2}{n}\right)\tr_\omega\Ric(\omega_{\mathrm{H}}) - \tr_{\omega_{\mathrm{H}}}\Ric(\omega)
\end{align}
Let $x_0$ be a maximum point of $\tr_{\omega_{\mathrm{H}}}\omega$. If $\tr_{\omega_{\mathrm{H}}(x_0)}\omega(x_0) < 2$ then the result is proved, so we may assume that the quantity in parentheses in \eqref{s3_LaplacianTrace} is non-negative. But since $\Ric(\omega_{\mathrm{H}})\geq 0$, this means that the first term of \eqref{s3_LaplacianTrace} is non-negative. Thus we may use \eqref{s1_ContinuityEquation} to obtain
\begin{align*}
0
& \geq -\tr_{\omega_{\mathrm{H}}(x_0)}\Ric(\omega(x_0)) \\
& = \tr_{\omega_{\mathrm{H}}(x_0)}\frac{\omega(x_0)-\omega_0(x_0)}{s} \nonumber
\end{align*}
or
\begin{align*}
\tr_{\omega_{\mathrm{H}}}\omega \leq \|\tr_{\omega_{\mathrm{H}}}\omega_0\|_{C^0}
\end{align*}
as desired.
\end{proof}

\noindent Finally we complete the proof of Theorem \ref{s1_Theorem1}:

\begin{proposition}\label{s3_CurvatureEstimate}
Suppose that $\omega = \hat\omega + i\partial\bar\partial\varphi$ solves \eqref{s1_ContinuityEquation} on $X\times[0,\frac{1}{n})$ with $\omega_0 = \omega_{\mathrm{H}} + i\partial\bar\partial\varphi_0$. Then for $s$ sufficiently close to $1/n$, the scalar curvature of $\omega$ satisfies
\begin{align*}
\frac{C^{-1}}{(1-ns)^{1 - \frac{1}{n}}} \leq R(\omega)\leq \frac{C}{(1-ns)^{n-1}}.
\end{align*}
\end{proposition}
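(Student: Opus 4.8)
The plan is to exploit the fact that the continuity equation makes the Chern--Ricci curvature explicit, so that the Chern scalar curvature becomes a trace controllable purely by the metric estimates already established in Propositions \ref{s3_C0Estimate} and \ref{s3_C2Estimate}. Rewriting \eqref{s1_ContinuityEquation} as $\Ric(\omega) = \tfrac1s(\omega_0 - \omega)$ and tracing with respect to $\omega$ gives
\begin{align*}
R(\omega) = \tr_\omega\Ric(\omega) = \frac{1}{s}\big(\tr_\omega\omega_0 - n\big).
\end{align*}
Thus the whole problem reduces to bounding $\tr_\omega\omega_0$ from above and below. Since $\varphi_0$ is a fixed smooth function on the compact manifold $X$, the metric $\omega_0 = \omega_{\mathrm H} + i\ddb\varphi_0$ is uniformly equivalent to $\omega_{\mathrm H}$, so it suffices to estimate $\tr_\omega\omega_{\mathrm H}$; the prefactor $1/s$ is harmless since we work with $s$ close to $1/n$, where it is comparable to $n$.

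To estimate $\tr_\omega\omega_{\mathrm H}$ I would diagonalize $\omega$ with respect to $\omega_{\mathrm H}$ and let $\lambda_1,\dots,\lambda_n$ be the eigenvalues. Proposition \ref{s3_C2Estimate} gives $\lambda_i\leq C$, while Proposition \ref{s3_C0Estimate} together with \eqref{s3_OmegaHatDeterminant} yields
\begin{align*}
C^{-1}(1-ns)^{n-1}\leq \prod_{i=1}^n\lambda_i = \frac{\omega^n}{\omega_{\mathrm H}^n}\leq C(1-ns)^{n-1}.
\end{align*}
These two facts pin down the eigenvalues. For the upper bound on $\tr_\omega\omega_{\mathrm H} = \sum_i\lambda_i^{-1}$, I would write each $\lambda_i = (\prod_j\lambda_j)/(\prod_{j\neq i}\lambda_j)\geq C^{-1}(1-ns)^{n-1}/C^{n-1}$, so that every reciprocal is at most $C(1-ns)^{-(n-1)}$; summing gives $\tr_\omega\omega_{\mathrm H}\leq C(1-ns)^{-(n-1)}$ and hence the claimed upper bound on $R(\omega)$.

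For the lower bound I would instead apply the arithmetic--geometric mean inequality together with the upper volume bound,
\begin{align*}
\tr_\omega\omega_{\mathrm H} = \sum_{i=1}^n\frac{1}{\lambda_i}\geq n\Big(\prod_{i=1}^n\lambda_i\Big)^{-1/n}\geq n\big(C(1-ns)^{n-1}\big)^{-1/n} = C^{-1}(1-ns)^{-(1-\frac1n)}.
\end{align*}
Since this blows up as $s\to 1/n$, the constant $-n$ in the expression for $R(\omega)$ is absorbed: for $s$ sufficiently close to $1/n$ one has $\tr_\omega\omega_0 - n\geq\tfrac12\tr_\omega\omega_0$, and combining this with $1/s\geq n$ gives $R(\omega)\geq C^{-1}(1-ns)^{-(1-\frac1n)}$.

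The computation itself is routine once the reduction is in place; the main point to get right is the final paragraph, namely that the lower bound is genuinely only available near the singular time (which is precisely why the statement is restricted to $s$ close to $1/n$), and the conceptual content is the asymmetry between the exponents $n-1$ and $1-\tfrac1n$: the upper bound is governed by the worst case of a single small eigenvalue absorbing the whole volume collapse, whereas the lower bound reflects the evenly distributed geometric mean.
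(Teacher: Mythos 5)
Your proposal is correct and follows essentially the same route as the paper: reduce via $R(\omega) = \frac{1}{s}(\tr_\omega\omega_0 - n)$, then bound $\tr_\omega\omega_0$ from below by the $n$-th root of the volume ratio (AM--GM) and from above by the volume ratio times $(\tr_{\omega_0}\omega)^{n-1}$, using Propositions \ref{s3_C0Estimate} and \ref{s3_C2Estimate} together with $\hat\omega^n = (1-ns)^{n-1}\omega_{\mathrm H}^n$. Your eigenvalue formulation is just an unpacking of the paper's two trace--determinant inequalities, and your explicit handling of the subtracted $n$ near $s=1/n$ is a point the paper leaves implicit in the hypothesis ``for $s$ sufficiently close to $1/n$.''
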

\begin{proof}
Equation \eqref{s1_ContinuityEquation} implies
\begin{align}
R(\omega) = \frac{\tr_\omega\omega_0 - n}{s}.
\end{align}
The above estimates together with \eqref{s4_OmegaHatDeterminant} give
\begin{align*}
\tr_\omega\omega_0
 \geq C^{-1}\left(\frac{\omega_0^n}{\omega^n}\right)^\frac{1}{n} 
 \geq C^{-1}\left(\frac{\omega_{\mathrm{H}}^n}{\hat\omega^n}\right)^\frac{1}{n} \nonumber 
 = \frac{C^{-1}}{(1-ns)^{1-\frac{1}{n}}} \nonumber
\end{align*}
and
\begin{align*}
\tr_\omega\omega_0
\leq C\frac{\omega_0^n}{\omega^n}(\tr_{\omega_0}\omega)^{n-1} 
\leq C\frac{\omega_{\mathrm{H}}^n}{\hat\omega^n} \nonumber 
= \frac{C}{(1-ns)^{n-1}}. \nonumber
\end{align*}
\end{proof}

\section{Hopf surfaces of class 1}\label{s4}

\noindent In this section we focus on Hopf surfaces of class 1. Specifically, let $X$ be the quotient of $\mathbb C^2\setminus\{0\}$ by the relation $(\alpha z,\beta w)\sim(z,w)$, where $1 < |\alpha| \leq |\beta|$. First we describe a few explicit $(1,1)$-forms on $X$ as detailed in \cite{go98} (see also \cite{edwards21}). The equation
\begin{align*}
|z|^2\Phi^{-2\gamma_1} + |w|^2\Phi^{-2\gamma_2} = 1
\end{align*}
uniquely defines a smooth function $\Phi(z,w)$ on $\mathbb C^2\setminus\{0\}$, where
\begin{align*}
\gamma_1 = \frac{\log|\alpha|}{\log|\alpha| + \log|\beta|} \ \ \ \ \ , \ \ \ \ \ \gamma_2 = \frac{\log|\beta|}{\log|\alpha| + \log|\beta|}.
\end{align*}
The Gauduchon-Ornea metric
\begin{align*}
\omega_{\mathrm{GO}} = \frac{i\ddb\Phi}{\Phi}
\end{align*}
is a well-defined Hermitian metric on $X$. Define a non-negative $(1,1)$-form
\begin{align*}
\Theta = \frac{i\partial\Phi\wedge\bar\partial\Phi}{\Phi^2}.
\end{align*}
Then we have 
\begin{align}\label{s4_ThetaLessThanOmegaGO}
\Theta\leq\omega_{\mathrm{GO}}.
\end{align}
It is also convenient to define another Hermitian metric $\chi$ by
\begin{align*}
\chi_{\bar kj} = \Phi^{-2\gamma_j}\delta_{\bar kj}
\end{align*}
as this satisfies
\begin{align}\label{s4_RicciChi}
\Ric(\chi) = 2(\omega_{\mathrm{GO}} - \Theta)
\end{align}
and so $\Ric(\chi)\geq 0$ by \eqref{s4_ThetaLessThanOmegaGO}. As in \cite{edwards21}, we consider the family of Hermitian metrics
\begin{align}\label{s4_OmegaHatDefinition}
\hat\omega & = \omega_{\mathrm{GO}} - s\Ric(\chi) \\
& = (1-2s)\omega_{\mathrm{GO}} + 2s\Theta. \nonumber
\end{align}
In contrast to the case of the round Hopf manifolds, explicit solutions to the equation \ref{s1_ContinuityEquation} are not known.
It can be shown that 
\begin{align}\label{s4_OmegaHatDeterminant}
\hat\omega^2 = (1-2s)\omega_{\mathrm{GO}}^2
\end{align}
which implies
\begin{align}\label{s4_RicciOmegaHat}
\Ric(\hat\omega) = \Ric(\omega_{\mathrm{GO}}).
\end{align}

Let $\varphi_0$ be a smooth function with $\omega_0 = \omega_{\mathrm{GO}} + i\ddb\varphi_0 > 0$. The equation \eqref{s1_ContinuityEquation} admits a unique solution on $X\times[0,\frac12)$, which necessarily has the form $\omega = \hat\omega + i\ddb\varphi$ \cite[Theorem 1]{sw20}. We begin by proving parts \textit{(1)} and \textit{(2)} of Theorem \ref{s1_Theorem2}:

\begin{proposition}\label{s4_C0Estimate}
Suppose that $\omega = \hat\omega + i\partial\bar\partial\varphi$ solves \eqref{s1_ContinuityEquation} on $X\times[0,\frac12)$ with $\omega_0 = \omega_{\mathrm{GO}} + i\partial\bar\partial\varphi_0$, subject to the normalization
\begin{align}\label{s4_Normalization}
\int_Xe^\frac{\varphi - \varphi_0}{s}\chi^2 = \int_X\omega_{\mathrm{GO}}^2.
\end{align}
Then $|\varphi|\leq C$ and furthermore $C^{-1}\leq\omega^2/\hat\omega^2\leq C$.
\end{proposition}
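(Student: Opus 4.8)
The plan is to mirror the proof of Proposition \ref{s3_C0Estimate}: reduce \eqref{s1_ContinuityEquation} to a scalar equation for $\varphi$ and then run the maximum principle. First I would rewrite the equation using $\chi$ as reference. Since $\hat\omega = \omega_{\mathrm{GO}} - s\Ric(\chi)$ by \eqref{s4_OmegaHatDefinition} and $\omega_0 = \omega_{\mathrm{GO}} + i\ddb\varphi_0$, we have $\omega_0 - \omega = s\Ric(\chi) + i\ddb(\varphi_0 - \varphi)$, so \eqref{s1_ContinuityEquation} gives $\Ric(\omega) = \Ric(\chi) + i\ddb\frac{\varphi_0 - \varphi}{s}$. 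Combining this with $\Ric(\omega) - \Ric(\chi) = -i\ddb\log(\omega^2/\chi^2)$ yields $i\ddb\big(\log\frac{\omega^2}{\chi^2} - \frac{\varphi - \varphi_0}{s}\big) = 0$. A pluriharmonic function $f$ (one with $i\ddb f = 0$) satisfies $\Delta_\omega f = \tr_\omega(i\ddb f) = 0$ and is therefore constant on the compact surface $X$, so $\log\frac{\omega^2}{\chi^2} = \frac{\varphi - \varphi_0}{s} + c$ for some constant $c = c(s)$.

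Next I would pin down $c$ using the normalization \eqref{s4_Normalization}. Integrating $\omega^2 = e^{(\varphi - \varphi_0)/s + c}\chi^2$ and invoking \eqref{s4_Normalization} gives $e^{c}\int_X \omega_{\mathrm{GO}}^2 = \int_X \omega^2$. The point is that $\int_X\omega^2 = \int_X\hat\omega^2 = (1-2s)\int_X\omega_{\mathrm{GO}}^2$ by \eqref{s4_OmegaHatDeterminant}: indeed $\int_X(\omega^2 - \hat\omega^2) = \int_X i\ddb\varphi\wedge(\omega + \hat\omega)$ vanishes by Stokes once $i\ddb\hat\omega = 0$, which holds because $\omega_{\mathrm{GO}}$ is Gauduchon, hence pluriclosed in complex dimension two, and because $\Theta = \omega_{\mathrm{GO}} - \tfrac12\Ric(\chi)$ by \eqref{s4_RicciChi} with $\Ric(\chi)$ being $i\ddb$-exact. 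Thus $c = \log(1-2s)$, and the scalar equation becomes $\frac{\varphi - \varphi_0}{s} = \log\frac{\omega^2}{(1-2s)\chi^2}$.

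With this equation the $C^0$ bound follows from the maximum principle, as in Proposition \ref{s3_C0Estimate} but with one extra cancellation. At a maximum of $\varphi$ we have $i\ddb\varphi\le 0$, so $\omega\le\hat\omega$ and hence $\omega^2\le\hat\omega^2 = (1-2s)\omega_{\mathrm{GO}}^2$; substituting into the scalar equation, the degenerating factor $(1-2s)$ cancels and we obtain $\frac{\varphi - \varphi_0}{s}\le\log(\omega_{\mathrm{GO}}^2/\chi^2)\le C_0$ at that point, so $\sup_X\varphi\le\sup_X\varphi_0 + sC_0\le C$. The minimum is handled symmetrically, yielding $|\varphi|\le C$ uniformly in $s\in[0,\tfrac12)$.

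Finally, for the volume ratio I would rewrite the scalar equation as $\frac{\omega^2}{\hat\omega^2} = \frac{\chi^2}{\omega_{\mathrm{GO}}^2}\,e^{(\varphi - \varphi_0)/s}$, which shows that $C^{-1}\le\omega^2/\hat\omega^2\le C$ is equivalent to a two-sided uniform bound on $\frac{\varphi - \varphi_0}{s}$. I would extract this by applying the maximum principle to $\frac{\varphi-\varphi_0}{s}$ itself: at its maximum $\omega\le\omega_0 - s\Ric(\chi) = \hat\omega + i\ddb\varphi_0$, and at its minimum the reverse inequality holds. I expect this to be the main obstacle. A crude application only compares $\omega$ to the fixed nondegenerate metric $\chi$ and thus leaves a spurious factor $(1-2s)^{-1}$; a bound uniform as $s\to\tfrac12$ requires genuinely using that the comparison form $\hat\omega + i\ddb\varphi_0$ degenerates in tandem with $\hat\omega$ along the leaf direction $\Theta$, so that the two volume densities collapse at the same rate. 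Matching this collapse, rather than the reduction to the scalar equation, is the crux of the estimate.
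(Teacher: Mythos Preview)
Your reduction to the scalar equation and the maximum-principle argument for $|\varphi|\le C$ are correct and match the paper's proof. Your route to the scalar equation---pinning down the constant $c=\log(1-2s)$ via Stokes and the pluriclosedness of $\hat\omega$---is a valid alternative to the paper's, which instead introduces $f=\log(\omega_{\mathrm{GO}}^2/\chi^2)$ and uses $\Ric(\hat\omega)=\Ric(\omega_{\mathrm{GO}})$ to write the equation directly as
\[
\log\frac{\omega^2}{\hat\omega^2}=\frac{\varphi-\varphi_0}{s}-f;
\]
the two scalar equations are the same.

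Where you go astray is the last paragraph. You have already done all the work: once $|\varphi|\le C$ is established, the scalar equation gives
\[
\Big|\log\frac{\omega^2}{\hat\omega^2}\Big|\le\frac{|\varphi-\varphi_0|}{s}+\|f\|_{C^0}\le\frac{C}{s}+C,
\]
which is uniformly bounded as $s\to\tfrac12$ since $s$ stays away from $0$ (and for small $s$ the solution depends smoothly on $s$, so all bounds are automatic). This is precisely what the paper means by ``the second estimate now follows from the first one and \eqref{s4_ScalarEquation}.'' There is no need to run a second maximum principle on $(\varphi-\varphi_0)/s$, no degeneracy along the $\Theta$ direction to ``match,'' and no crux beyond what you have already proved. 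The obstacle you describe in your final sentences is imaginary.
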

\begin{proof}
The forms of $\omega$, $\omega_0$, and $\hat\omega$ show that the equation \eqref{s1_ContinuityEquation} is
\begin{align*}
i\partial\bar\partial\frac{\varphi - \varphi_0}{s} = \Ric(\chi) - \Ric(\omega).
\end{align*}
Set $f = \log\frac{\omega_{\mathrm{GO}}^2}{\chi^2}$. Then by \eqref{s4_RicciOmegaHat},
\begin{align}
i\partial\bar\partial\frac{\varphi - \varphi_0}{s}
& = \Ric(\omega_{\mathrm{GO}}) - \Ric(\omega) + i\partial\bar\partial f \\
& = \Ric(\hat\omega) - \Ric(\omega) + i\partial\bar\partial f \nonumber \\
& = i\partial\bar\partial\log\frac{\omega^2}{\hat\omega^2} + i\partial\bar\partial f. \nonumber
\end{align}
The normalization \eqref{s4_Normalization} implies that $\varphi$ solves the following scalar equation
\begin{align}\label{s4_ScalarEquation}
\log\frac{(\hat\omega + i\partial\bar\partial\varphi)^2}{\hat\omega^2} = \frac{\varphi-\varphi_0}{s} - f.
\end{align}
Now at a maximum point $x_0$ of $\varphi$ we have $i\partial\bar\partial\varphi(x_0)\leq 0$, so that $(\hat\omega(x_0) + i\partial\bar\partial\varphi(x_0))^2 \leq\hat\omega(x_0)^2$. Thus by \eqref{s4_ScalarEquation}
\begin{align*}
\frac{\varphi(x_0) - \varphi_0(x_0)}{s} - f(x_0) \leq 0
\end{align*}
or
\begin{align*}
\varphi \leq \|\varphi_0\|_{C^0} + \frac12\|f\|_{C^0}.
\end{align*}
A similar argument gives a lower bound on $\varphi$. The second estimate now follows from the first one and \eqref{s4_ScalarEquation}.
\end{proof}

\noindent From here, we can prove part \textit{(3)} of Theorem \ref{s1_Theorem2}:

\begin{proposition}\label{s4_C2Estimate}
Suppose that $\omega = \hat\omega + i\partial\bar\partial\varphi$ solves \eqref{s1_ContinuityEquation} on $X\times[0,\frac12)$ with $\omega_0 = \omega_{\mathrm{GO}} + i\partial\bar\partial\varphi_0$. Then $\tr_{\omega_{\mathrm{GO}}}\omega\leq C$ and hence $\omega\leq C\omega_{\mathrm{GO}}$.
\end{proposition}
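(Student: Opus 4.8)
The plan is to follow the strategy of Proposition \ref{s3_C2Estimate} and of Edwards \cite{edwards21}, applying Cherrier's identity \eqref{s2_LaplacianTrace} with the reference metric $\chi = \omega_{\mathrm{GO}}$ and then running the maximum principle for $\tr_{\omega_{\mathrm{GO}}}\omega$. The first point to check is that this choice of reference is admissible, i.e.\ that $d(\omega - \omega_{\mathrm{GO}}) = 0$: indeed $\omega - \omega_{\mathrm{GO}} = -s\Ric(\chi) + i\ddb\varphi$ by \eqref{s4_OmegaHatDefinition}, and both $\Ric(\chi)$ and $i\ddb\varphi$ are $d$-closed, so \eqref{s2_LaplacianTrace} applies. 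Discarding the manifestly nonnegative final term, this yields
\begin{align*}
\Delta_\omega\tr_{\omega_{\mathrm{GO}}}\omega \geq -\tr_{\omega_{\mathrm{GO}}}\Ric(\omega) + E,
\end{align*}
where $E = g^{j\bar k}R_{\bar kj}{}^{p\bar q}g_{\bar qp} + g^{j\bar k}R^p{}_{p\bar kj} - g^{j\bar k}R_{\bar kj}{}^p{}_p - g^{j\bar k}g_{\mathrm{GO}}^{p\bar q}g^{\mathrm{GO}}_{\bar sr}T^r_{pj}\bar T^s_{qk}$ collects the curvature and torsion terms of $\omega_{\mathrm{GO}}$.

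Next I would rewrite the Ricci term using the equation. Since \eqref{s1_ContinuityEquation} gives $\Ric(\omega) = (\omega_0 - \omega)/s$, we have $-\tr_{\omega_{\mathrm{GO}}}\Ric(\omega) = \tr_{\omega_{\mathrm{GO}}}(\omega - \omega_0)/s$, exactly as in the round case. Let $x_0$ be a maximum point of $\tr_{\omega_{\mathrm{GO}}}\omega$, so that $\Delta_\omega\tr_{\omega_{\mathrm{GO}}}\omega(x_0)\leq 0$. If $\tr_{\omega_{\mathrm{GO}}}\omega(x_0)$ is small we are already done; otherwise, provided one can show $E(x_0)\geq 0$, one obtains
\begin{align*}
0 \geq \frac{\tr_{\omega_{\mathrm{GO}}}\omega(x_0) - \tr_{\omega_{\mathrm{GO}}}\omega_0(x_0)}{s},
\end{align*}
whence $\tr_{\omega_{\mathrm{GO}}}\omega \leq \|\tr_{\omega_{\mathrm{GO}}}\omega_0\|_{C^0}$, which is the claim (and $\omega\leq C\omega_{\mathrm{GO}}$ follows at once). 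Everything therefore reduces to showing that $E\geq 0$ whenever $\tr_{\omega_{\mathrm{GO}}}\omega$ is large.

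The analysis of $E$ is the heart of the matter and the step I expect to be the main obstacle. In the round case the analogous quantity was shown to equal $(\tfrac1n\tr_{\omega_{\mathrm{H}}}\omega - \tfrac2n)\tr_\omega\Ric(\omega_{\mathrm{H}})$, nonnegative once $\tr_{\omega_{\mathrm{H}}}\omega\geq 2$ because $\Ric(\omega_{\mathrm{H}})\geq 0$. I would aim to reproduce this structure, exploiting that $\omega_{\mathrm{GO}} = \Phi^{-1}\,i\ddb\Phi$ is locally conformally K\"ahler and that $\Ric(\chi) = 2(\omega_{\mathrm{GO}} - \Theta)\geq 0$ by \eqref{s4_RicciChi} and \eqref{s4_ThetaLessThanOmegaGO}. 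Concretely, one computes the Chern curvature of $\omega_{\mathrm{GO}}$ from the defining relation for $\Phi$ (or via the conformal factor $\Phi^{-1}$ applied to the K\"ahler metric $i\ddb\Phi$ on the universal cover) and hopes to show that the curvature and torsion terms of $\omega_{\mathrm{GO}}$ assemble into a nonnegative multiple of $\tr_\omega\Ric(\chi)$, of the form $E\geq (c\,\tr_{\omega_{\mathrm{GO}}}\omega - C)\tr_\omega\Ric(\chi)$, which is $\geq 0$ for $\tr_{\omega_{\mathrm{GO}}}\omega$ large since $\Ric(\chi)\geq 0$. The delicate part is the bookkeeping of the torsion of the non-K\"ahler metric $\omega_{\mathrm{GO}}$ and the precise constants.

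Should the clean identity of the round case fail to persist, the fallback is to apply the maximum principle to $Q = \log\tr_{\omega_{\mathrm{GO}}}\omega - A\varphi$, using \eqref{s2_LaplacianLogTrace} in place of \eqref{s2_LaplacianTrace} and $\Delta_\omega(-A\varphi) = A\tr_\omega\hat\omega - 2A$. The critical constraint is that $A$ must be chosen \emph{independent} of $s$: the volume estimate of Proposition \ref{s4_C0Estimate} together with $\hat\omega^2 = (1-2s)\omega_{\mathrm{GO}}^2$ shows that $\tr_\omega\omega_{\mathrm{GO}}$ is comparable to $(1-2s)^{-1}\tr_{\omega_{\mathrm{GO}}}\omega$, so an $s$-dependent choice $A\sim(1-2s)^{-1}$ needed to absorb a generic negative bisectional term would amplify the oscillation of $\varphi$ and destroy the uniformity of the bound. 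Thus even in the fallback the success hinges on controlling the negative part of $E$ by $\tr_{\omega_{\mathrm{GO}}}\omega$ rather than by $(1-2s)^{-1}\tr_{\omega_{\mathrm{GO}}}\omega$, i.e.\ on the same favorable curvature structure of $\omega_{\mathrm{GO}}$.
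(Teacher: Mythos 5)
Your overall framework (Cherrier's identity plus a maximum principle with a $-A\varphi$ barrier) is the right one, but the step you yourself flag as ``the heart of the matter'' --- showing that the curvature/torsion terms $E$ of the reference metric are bounded below by $(c\,\tr_{\omega_{\mathrm{GO}}}\omega - C)\tr_\omega\Ric(\chi)$ --- is left as a hope, and the route you propose for it (taking $\omega_{\mathrm{GO}}$ itself as the reference metric in \eqref{s2_LaplacianTrace}) is not the one that closes the argument. The paper instead takes the \emph{diagonal} metric $\chi_{\bar kj} = \Phi^{-2\gamma_j}\delta_{\bar kj}$ as the reference and bounds $\tr_\chi\omega$, which suffices because $\chi$ is uniformly equivalent to $\omega_{\mathrm{GO}}$. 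The point of this choice is that the curvature of $\chi$ has an explicit product structure: one computes
\begin{align*}
g^{j\bar k}R_{\bar kj}{}^{p\bar q}g_{\bar qp} = \gamma_p\chi^{p\bar q}g_{\bar qp}\,\tr_\omega\Ric(\chi) \geq C^{-1}\tr_\chi\omega\,\tr_\omega\Ric(\chi),
\end{align*}
which is exactly the good term you are after, while the remaining curvature and torsion terms are bounded below only by $-C\tr_\omega\omega_{\mathrm{GO}}$ --- \emph{not} by a constant. Consequently $\tr_\chi\omega$ alone does not satisfy a usable differential inequality, and the barrier $-A\varphi$ is not a fallback but an essential ingredient: since $\Delta_\omega\varphi = 2 - \tr_\omega\omega_{\mathrm{GO}} + s\tr_\omega\Ric(\chi)$, the term $-A\Delta_\omega\varphi$ contributes $+A\tr_\omega\omega_{\mathrm{GO}}$, which absorbs the bad term with $A$ a \emph{uniform} constant, at the cost of $-\tfrac{A}{2}\tr_\omega\Ric(\chi)$, which is in turn absorbed by the good term once $\tr_\chi\omega \geq CA$. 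Your concern that $A$ would have to grow like $(1-2s)^{-1}$ therefore does not materialize; the factor $s \leq \tfrac12$ in front of $\tr_\omega\Ric(\chi)$ is all that is needed. At a maximum of $\tr_\chi\omega - A\varphi$ one then argues as in your sketch, using $\Ric(\chi)\geq 0$ and the continuity equation to get $\tr_\chi\omega(x_0) \leq \tr_\chi\omega_0(x_0) + 2As$, and Proposition \ref{s4_C0Estimate} to convert the bound at $x_0$ into a global one.

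Two smaller remarks. Your verification that $d(\omega - \omega_{\mathrm{GO}}) = 0$ (via the closedness of $\Ric(\chi) = 2i\ddb\log\Phi$) is correct, but it justifies a choice of reference metric the paper does not make. And your fallback with $\log\tr_{\omega_{\mathrm{GO}}}\omega - A\varphi$ does not evade the difficulty: it still requires the same control of the negative curvature contributions of $\omega_{\mathrm{GO}}$ that you have not established, whereas switching the reference to $\chi$ makes that control an explicit computation.
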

\begin{proof}
The proof follows \cite{edwards21}. We will obtain a bound on $\tr_\chi\omega$, since this is equivalent to the desired bound. Compute
\begin{align*}
g^{j\bar k}R_{\bar kj}{}^{p\bar q}g_{\bar qp}
& = \gamma_p\chi^{p\bar q}g_{\bar qp}\tr_\omega\Ric(\chi) \\
& \geq C^{-1}\tr_\chi\omega\tr_\omega\Ric(\chi) \nonumber
\end{align*}
by taking $C^{-1}\leq\min\gamma_j$,
and
\begin{align*}
g^{j\bar k}R^p{}_{p\bar kj} - g^{j\bar k}R_{\bar kj}{}^p{}_p - g^{j\bar k}\chi^{p\bar q}\chi_{\bar sr}T^r_{pj}\bar T^s_{qk} \geq -C\tr_\omega\omega_{\mathrm{GO}}.
\end{align*}
Therefor by \eqref{s2_LaplacianTrace}
\begin{align*}
\Delta_\omega\tr_\chi\omega \geq C^{-1}\tr_\chi\omega\tr_\omega\Ric(\chi) - C\tr_\omega\omega_{\mathrm{GO}} - \tr_\chi\Ric(\omega)
\end{align*}
and since
\begin{align*}
\Delta_\omega\varphi
& = 2 - \tr_\omega\omega_{\mathrm{GO}} + s\tr_\omega\Ric(\chi) \\
& \leq 2 - \tr_\omega\omega_{\mathrm{GO}} + \frac12\tr_\omega\Ric(\chi) \nonumber
\end{align*}
also
\begin{align}\label{s4_LaplacianTrace}
\Delta_\omega(\tr_\chi\omega & - A\varphi) \\
& \geq \left(\frac{\tr_\chi\omega}{C} - \frac{A}{2}\right)\tr_\omega\Ric(\chi) + (A - C)\tr_\omega\omega_{\mathrm{GO}} - \tr_\chi\Ric(\omega) - 2A \nonumber \\
& \geq \left(\frac{\tr_\chi\omega}{C} - \frac{A}{2}\right)\tr_\omega\Ric(\chi) - \tr_\chi\Ric(\omega) - 2A \nonumber
\end{align}
after choosing $A$ larger than $C$.

Let $x_0$ be a maximum point of $\tr_\chi\omega-A\varphi$. If
\begin{align*}
\frac{\tr_{\chi(x_0)}\omega(x_0)}{C} - \frac{A}{2} < 0
\end{align*}
then the result is proved, so we may assume that the coefficient of $\tr_\omega\Ric(\chi)$ in \eqref{s4_LaplacianTrace} is non-negative. But since $\Ric(\chi)\geq 0$, this means that the first term of \eqref{s4_LaplacianTrace} is non-negative. Thus we may use \eqref{s1_ContinuityEquation} to obtain
\begin{align*}
0
& \geq -\tr_{\chi(x_0)}\Ric(\omega(x_0)) - 2A \\
& = \tr_{\chi(x_0)}\frac{\omega(x_0)-\omega_0(x_0)}{s} - 2A \nonumber
\end{align*}
or
\begin{align*}
\tr_{\chi(x_0)}\omega(x_0) \leq \tr_{\chi(x_0)}\omega_0(x_0) + A
\end{align*}
which implies
\begin{align*}
\tr_\chi\omega \leq \|\tr_\chi\omega_0\|_{C^0} + 2A\|\varphi\|_{C^0} + A.
\end{align*}
Finally, applying Proposition \ref{s4_C0Estimate} proves the result.
\end{proof}

\noindent We are now ready to finish the proof of Theorem \ref{s1_Theorem2}:

\begin{proposition}\label{s4_CurvatureEstimate}
Suppose that $\omega = \hat\omega + i\partial\bar\partial\varphi$ solves \eqref{s1_ContinuityEquation} on $X\times[0,\frac12)$ with $\omega_0 = \omega_{\mathrm{GO}} + i\partial\bar\partial\varphi_0$. Then $\tr_{\omega_{\mathrm{GO}}}\omega\leq C$ and hence $\omega\leq C\omega_{\mathrm{GO}}$. Then for $s$ sufficiently close to $1/2$, the scalar curvature of $\omega$ satisfies
\begin{align*}
\frac{C^{-1}}{\sqrt{1-2s}} \leq R(\omega)\leq \frac{C}{1-2s}.
\end{align*}
\end{proposition}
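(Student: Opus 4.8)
The plan is to follow the template of Proposition \ref{s3_CurvatureEstimate}, specialized to complex dimension $n=2$. First I would take the trace of the continuity equation \eqref{s1_ContinuityEquation} against $\omega$. Since $\tr_\omega\omega = 2$ and $\tr_\omega\Ric(\omega) = R(\omega)$, this yields
\[
R(\omega) = \frac{\tr_\omega\omega_0 - 2}{s}.
\]
Thus everything reduces to bounding $\tr_\omega\omega_0$ from above and below and tracking how these bounds depend on $(1-2s)$. For $s$ close to $1/2$ the prefactor $1/s$ is bounded and the additive $-2$ is negligible against the blowup rate, so the constants $C$ absorb both.

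For the lower bound I would apply the arithmetic--geometric mean inequality in dimension two,
\[
\tr_\omega\omega_0 \geq 2\left(\frac{\omega_0^2}{\omega^2}\right)^{1/2}.
\]
Since $\omega_0$ is a fixed Hermitian metric comparable to $\omega_{\mathrm{GO}}$ on the compact surface, $\omega_0^2 \geq C^{-1}\omega_{\mathrm{GO}}^2$; and by the volume estimate of Proposition \ref{s4_C0Estimate} together with \eqref{s4_OmegaHatDeterminant}, $\omega^2 \leq C\hat\omega^2 = C(1-2s)\omega_{\mathrm{GO}}^2$. Substituting gives $\tr_\omega\omega_0 \geq C^{-1}(1-2s)^{-1/2}$, hence the claimed lower bound $R(\omega) \geq C^{-1}(1-2s)^{-1/2}$.

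For the upper bound I would use the elementary two-dimensional identity
\[
\tr_\omega\omega_0 = \frac{\omega_0^2}{\omega^2}\,\tr_{\omega_0}\omega,
\]
which follows from $\omega_0\wedge\omega = \omega\wedge\omega_0$. Here $\tr_{\omega_0}\omega \leq C\tr_{\omega_{\mathrm{GO}}}\omega \leq C$ by the $C^2$-estimate of Proposition \ref{s4_C2Estimate} (again using $\omega_0 \geq C^{-1}\omega_{\mathrm{GO}}$ so that the trace with respect to the comparable fixed metric is controlled), while the volume ratio is bounded via $\omega^2 \geq C^{-1}\hat\omega^2 = C^{-1}(1-2s)\omega_{\mathrm{GO}}^2$ and $\omega_0^2 \leq C\omega_{\mathrm{GO}}^2$, giving $\omega_0^2/\omega^2 \leq C(1-2s)^{-1}$. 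Combining yields $\tr_\omega\omega_0 \leq C(1-2s)^{-1}$ and therefore the claimed upper bound.

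The computation is essentially routine once the a priori estimates are in hand; the only genuine inputs are the volume estimate of Proposition \ref{s4_C0Estimate}, which supplies the $(1-2s)$ factor through $\hat\omega^2 = (1-2s)\omega_{\mathrm{GO}}^2$, paired with the AM--GM inequality for the lower bound and the two-dimensional identity for the upper bound. The one point deserving care is the asymmetry of the two rates: the lower bound scales like $(1-2s)^{-1/2}$ because it sees only the geometric mean of the eigenvalues of $\omega^{-1}\omega_0$, whereas the upper bound scales like $(1-2s)^{-1}$ because $\tr_{\omega_0}\omega$ stays bounded while the full volume ratio degenerates. I would be sure to use the restriction to $s$ near $1/2$ to absorb the constant $-2$ and the harmless factor $1/s$ into $C$.
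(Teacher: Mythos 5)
Your proposal is correct and follows essentially the same route as the paper: trace the continuity equation to get $R(\omega) = (\tr_\omega\omega_0 - 2)/s$, bound $\tr_\omega\omega_0$ from below by AM--GM together with $\omega^2 \leq C\hat\omega^2 = C(1-2s)\omega_{\mathrm{GO}}^2$, and from above via the two-dimensional identity $\tr_\omega\omega_0 = (\omega_0^2/\omega^2)\tr_{\omega_0}\omega$ combined with Propositions \ref{s4_C0Estimate} and \ref{s4_C2Estimate}. Your remarks on absorbing the $-2$ and the factor $1/s$ for $s$ near $1/2$, and on the source of the asymmetric rates, are accurate and match the paper's (terser) argument.
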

\begin{proof}
Equation \eqref{s1_ContinuityEquation} implies
\begin{align*}
R(\omega) = \frac{\tr_\omega\omega_0 - 2}{s}.
\end{align*}
The above estimates together with \eqref{s3_OmegaHatDeterminant} give
\begin{align*}
\tr_\omega\omega_0
 \geq C^{-1}\left(\frac{\omega_0^2}{\omega^2}\right)^\frac{1}{2} 
 \geq C^{-1}\left(\frac{\omega_{\mathrm{GO}}^2}{\hat\omega^2}\right)^\frac{1}{2} \nonumber 
 = \frac{C^{-1}}{\sqrt{1-2s}} \nonumber
\end{align*}
and
\begin{align*}
\tr_\omega\omega_0
 = \frac{\omega_0^2}{\omega^2}\tr_{\omega_0}\omega 
 \leq C\frac{\omega_{\mathrm{GO}}^2}{\hat\omega^2} \nonumber 
 = \frac{C}{1-2s}. \nonumber
\end{align*}
\end{proof}

\section{Inoue surfaces}\label{s5}

\noindent Let $X$ be the quotient of $\mathbb C \times \mathbb H$ by the relations $(\mu z,\lambda w) \sim (z,w)$ and $(z + m_j, w + \ell_j) \sim (z,w)$ where $M$ is a given matrix in $SL_3(\mathbb Z)$ and $Mm = \mu m$, $M\ell = \lambda\ell$. On $\mathbb C \times \mathbb H$ we use coordinates $(z,w)$ and set $y = \Imm w$. The expressions
\begin{align*}
\alpha = \frac{1}{4y^2}idw\wedge d\bar w
\end{align*}
\begin{align*}
\beta = y\,idz\wedge d\bar z
\end{align*}
define $(1,1)$-forms on $X$. Let $\Omega = \alpha\wedge\beta$. Then
\begin{align}\label{s5_iddblogOmega}
i\ddb\log\Omega = \alpha.
\end{align}
\noindent By the recent work of Angella-Tosatti \cite{ta21}, in the $i\ddb$-class of any Gauduchon metric, there is a metric $\omega_{\mathrm{LF}}$ satisfying
\begin{align}\label{s5_OmegaLF}
c\alpha\wedge\omega_{\mathrm{LF}} = \Omega.
\end{align}
Using this, we may assume for the proof of Theorem \ref{s1_Theorem3} that $\omega_0 = \omega_{\mathrm{LF}} + i\ddb\varphi_0$. We consider the family of Hermitian metrics
\begin{align*}
\hat\omega
& = (\omega_{\mathrm{LF}} + s\alpha)/(s+1).
\end{align*}

For example, the Tricerri metric $\omega_{\mathrm{Tr}} = 4\alpha + \beta$ satisfies \eqref{s5_OmegaLF}. It also satisfies $\Ric(\omega_{\mathrm{Tr}}) = -\alpha$. In this case $\hat\omega = (\omega_{\mathrm{Tr}} - \Ric(\omega_{\mathrm{Tr}}))/(s+1)$; the reader can check that $\Ric(\hat\omega) = \Ric(\omega_{\mathrm{Tr}})$, and so $\hat\omega$ gives an explicit solution of equation \eqref{s1_NormalizedContinuityEquation} with $\omega_0 = \omega_{\mathrm{Tr}}$.

Let $\varphi_0$ be a given smooth function with $\omega_0 = \omega_{\mathrm{LF}} + i\ddb\varphi_0 > 0$. By \cite[Theorem 1]{sw20}, the equation \eqref{s1_NormalizedContinuityEquation} admits a unique solution on $X\times[0,\infty)$ of the form $\omega = \hat\omega + i\ddb\varphi$.

Here we prove parts \textit{(1)} and \textit{(2)} of Theorem \ref{s1_Theorem3}:

\begin{proposition}\label{s5_C0Estimate}
Suppose that $\omega = \hat\omega + i\partial\bar\partial\varphi$ solves \eqref{s1_NormalizedContinuityEquation} on $X\times[0,\infty)$ with $\omega_0 = \omega_{\mathrm{LF}} + i\partial\bar\partial\varphi_0$, subject to the normalization
\begin{align}\label{s5_Normalization}
\int_Xe^\frac{(s+1)\varphi - \varphi_0}{s}\Omega = \frac{(s+1)^2}{2s}\int_X\omega^2.
\end{align}
Then $|\varphi|\leq C/(s+1)$. Furthermore
\begin{align}\label{s5_PreciseDeterminantEstimate}
-\frac{C}{s}\leq\log\frac{\omega^2}{\hat\omega^2}\leq \frac{C}{s}
\end{align}
and in particular
\begin{align}\label{s5_DeterminantEstimate}
C^{-1}\leq\frac{\omega^2}{\hat\omega^2}\leq C.
\end{align}
\end{proposition}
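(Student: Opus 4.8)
The plan is to reduce \eqref{s1_NormalizedContinuityEquation} to a scalar complex Monge--Amp\`ere equation and then run the maximum principle, exactly as in Propositions \ref{s3_C0Estimate} and \ref{s4_C0Estimate}, the new feature being that one must track the $s$-dependence carefully in order to extract the sharp rate $C/(s+1)$ rather than a mere $O(1)$ bound.

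The first step is to record the structural fact that $\hat\omega^2 = c_s\,\Omega$ for a constant $c_s$ depending only on $s$. Indeed, $\alpha\wedge\alpha = 0$ together with \eqref{s5_OmegaLF} and leafwise flatness give $\hat\omega^2 = (\omega_{\mathrm{LF}}^2 + 2s\,\alpha\wedge\omega_{\mathrm{LF}})/(s+1)^2 = c_s\Omega$, so by \eqref{s5_iddblogOmega} we have $\Ric(\hat\omega) = -\alpha$, equivalently $(s+1)\hat\omega + s\Ric(\hat\omega) = \omega_{\mathrm{LF}}$ (for the Tricerri metric $c_s = \tfrac{2(4+s)}{(s+1)^2}$). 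Writing $\Ric(\omega) = \Ric(\hat\omega) - i\ddb\log(\omega^2/\hat\omega^2)$ and substituting into \eqref{s1_NormalizedContinuityEquation}, the terms $(s+1)\hat\omega + s\Ric(\hat\omega)$ and $\omega_{\mathrm{LF}}$ cancel, leaving $i\ddb\big[(s+1)\varphi - s\log(\omega^2/\hat\omega^2) - \varphi_0\big] = 0$. Since $X$ carries a Gauduchon metric $\omega_G$, integration by parts against $\omega_G^{n-1}$ (using $\partial\bar\partial\omega_G^{n-1}=0$) shows any function killed by $i\ddb$ is constant; hence $(s+1)\varphi - s\log(\omega^2/\hat\omega^2) - \varphi_0$ equals a constant $k_s$, which \eqref{s5_Normalization} pins down. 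A short computation using $\omega^2 = c_s e^{-k_s/s}e^{\frac{(s+1)\varphi-\varphi_0}{s}}\Omega$ gives $k_s/s = \log c_s - \log\frac{2s}{(s+1)^2}$, so the scalar equation takes the form $\log(\omega^2/\hat\omega^2) = \frac{(s+1)\varphi-\varphi_0}{s} - \big(\log c_s - \log\frac{2s}{(s+1)^2}\big)$.

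Next I would apply the maximum principle. At a maximum point $x_0$ of $\varphi$ we have $i\ddb\varphi(x_0)\le 0$, hence $\omega^2(x_0)\le\hat\omega^2(x_0)$ and $\log(\omega^2/\hat\omega^2)(x_0)\le 0$; feeding this into the scalar equation yields $(s+1)\varphi(x_0) \le \varphi_0(x_0) + s\big(\log c_s - \log\frac{2s}{(s+1)^2}\big)$. The crucial point is that $\log c_s - \log\frac{2s}{(s+1)^2} = O(1/s)$ as $s\to\infty$ (for Tricerri it equals exactly $\log(1+4/s)$), so the right-hand side is $O(1)$, and since $\varphi\le\varphi(x_0)$ this gives $\varphi\le C/(s+1)$. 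A symmetric argument at a minimum point produces the matching lower bound, establishing $|\varphi|\le C/(s+1)$. The determinant estimate is then read off directly: substituting $|\varphi|\le C/(s+1)$ and $|\varphi_0|\le C$ into the scalar equation bounds $\frac{(s+1)\varphi-\varphi_0}{s}$ by $C/s$ and the remainder $\log c_s - \log\frac{2s}{(s+1)^2}$ by $C/s$, which is precisely \eqref{s5_PreciseDeterminantEstimate}; exponentiating, and treating small $s$ separately (where every quantity is uniformly controlled on compact $s$-intervals), gives \eqref{s5_DeterminantEstimate}.

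I expect the main obstacle to be not the maximum principle itself, which is routine, but verifying the delicate cancellation that makes all estimates uniform in $s$ with the correct rate. The reference volume $\hat\omega^2 = c_s\Omega$ is collapsing, with $s\,c_s\to 2$, so $\log c_s\to-\infty$; the normalization constant $\frac{2s}{(s+1)^2}$ in \eqref{s5_Normalization} is chosen precisely so that this divergence is cancelled down to an $O(1/s)$ error. Confirming this match (equivalently, that the leading collapse rate of $\hat\omega^2/\Omega$ is exactly the constant appearing in the normalization) is what upgrades the naive $O(1)$ bound to the asserted $C/(s+1)$, and is the one place where the specific geometry of the Inoue surface, rather than a formal manipulation, enters.
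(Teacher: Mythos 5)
Your overall strategy --- reduce \eqref{s1_NormalizedContinuityEquation} to a scalar Monge--Amp\`ere equation, apply the maximum principle at an extremum of $\varphi$, and track the $s$-dependence of the additive normalization constant to upgrade the bound to $C/(s+1)$ --- is exactly the paper's. However, your ``structural fact'' $\hat\omega^2 = c_s\Omega$ with $c_s$ a \emph{constant}, and the consequences you draw from it ($\Ric(\hat\omega) = -\alpha$ and the exact cancellation $(s+1)\hat\omega + s\Ric(\hat\omega) = \omega_{\mathrm{LF}}$), are false for a general $\omega_{\mathrm{LF}}$ satisfying \eqref{s5_OmegaLF}. Since $\alpha\wedge\alpha = 0$, the condition $c\,\alpha\wedge\omega_{\mathrm{LF}} = \Omega$ only constrains the $\beta$-component of $\omega_{\mathrm{LF}}$ and says nothing about $\omega_{\mathrm{LF}}^2$, which equals $2f\,\Omega$ for a bounded positive \emph{function} $f$, constant only in special cases such as the Tricerri metric. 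Consequently $(s+1)^2\hat\omega^2 = \omega_{\mathrm{LF}}^2 + 2s\Omega$ is not proportional to $\Omega$, one has $\Ric(\hat\omega) = -\alpha - i\ddb\log(s+f)$, and the cancellation you invoke to conclude $i\ddb\bigl[(s+1)\varphi - s\log(\omega^2/\hat\omega^2) - \varphi_0\bigr] = 0$ does not hold as stated.

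The repair is to bypass $\hat\omega^2$ and use $\Omega$ as the reference volume: writing $\Ric(\omega) = -i\ddb\log(\omega^2/\Omega) - \alpha$ (by \eqref{s5_iddblogOmega}) and $(s+1)\hat\omega = \omega_{\mathrm{LF}} + s\alpha$, equation \eqref{s1_NormalizedContinuityEquation} reduces directly to $i\ddb\log(\omega^2/\Omega) = i\ddb\frac{(s+1)\varphi - \varphi_0}{s}$, and the normalization \eqref{s5_Normalization} fixes the constant so that $\log\frac{(s+1)^2\omega^2}{2s\Omega} = \frac{(s+1)\varphi - \varphi_0}{s}$, which is the paper's \eqref{s5_ScalarEquation}. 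At a maximum of $\varphi$ one then gets $\frac{(s+1)\varphi - \varphi_0}{s} \leq \log\frac{(s+1)^2\hat\omega^2}{2s\Omega} = \log(1 + f/s) \leq f/s$; your quantity $\log c_s - \log\frac{2s}{(s+1)^2}$ is precisely this $\log(1+f/s)$, a bounded function of size $O(1/s)$ rather than a constant, and every subsequent step of your argument (the $C/(s+1)$ bound on $\varphi$, the two-sided $C/s$ bound on $\log(\omega^2/\hat\omega^2)$ via $2s\Omega \leq (s+1)^2\hat\omega^2 \leq (2s+C)\Omega$, the separate treatment of small $s$) goes through verbatim. So the gap is a misstatement of the geometry of $\omega_{\mathrm{LF}}$ rather than of the analysis; once corrected, your proof coincides with the paper's.
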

\begin{proof}
The forms of $\omega$, $\omega_0$, and $\hat\omega$ together with \eqref{s5_iddblogOmega}, show that the equation \eqref{s1_NormalizedContinuityEquation} is
\begin{align*}
i\ddb\log\frac{\omega^2}{\Omega} = i\partial\bar\partial\frac{(s+1)\varphi - \varphi_0}{s}.
\end{align*}
The normalization \eqref{s5_Normalization} implies that $\varphi$ solves the following scalar equation
\begin{align}\label{s5_ScalarEquation}
\log\frac{(s+1)^2(\hat\omega + i\partial\bar\partial\varphi)^2}{2s\Omega} = \frac{(s+1)\varphi-\varphi_0}{s}.
\end{align}
Now at a maximum point $x_0$ of $\varphi$ we have $i\partial\bar\partial\varphi(x_0)\leq 0$, so that $(\hat\omega(x_0) + i\partial\bar\partial\varphi(x_0))^2 \leq\hat\omega(x_0)^2$. Since $(s+1)^2\hat\omega^2 = \omega_{\mathrm{LF}}^2 + 2s\Omega$, \eqref{s5_ScalarEquation} becomes
\begin{align*}
\frac{(s+1)\varphi(x_0) - \varphi_0(x_0)}{s} \leq \log\left(1 + \frac{f(x_0)}{s}\right)
\end{align*}
where $f = \omega_{\mathrm{LF}}^2/2\Omega$. This implies
\begin{align*}
(s+1)\varphi(x_0) - \varphi_0(x_0)
 \leq s\log\left(1 + \frac{f(x_0)}{s}\right)  \leq f(x_0) \nonumber
\end{align*}
or
\begin{align*}
\varphi \leq \frac{\|\varphi_0\|_{C^0} + \|f\|_{C^0}}{s+1}.
\end{align*}
A similar argument gives
\begin{align*}
\varphi \geq -\frac{\|\varphi_0\|_{C^0}}{s+1}.
\end{align*}
Now \eqref{s5_ScalarEquation} gives
\begin{align*}
-\frac{C}{s}\leq\log\frac{(s+1)^2\omega^2}{2s\Omega}\leq \frac{C}{s}.
\end{align*}
Since $(s+1)^2\hat\omega^2 = \omega_{\mathrm{LF}}^2 + 2s\Omega$ we have $2s\Omega \leq (s+1)^2\hat\omega^2 \leq (2s+C)\Omega$ for $s\geq 1$. So
\begin{align*}
\log\frac{\omega^2}{\hat\omega^2}
 = \log\frac{(s+1)^2\omega^2}{(s+1)^2\hat\omega^2}  \leq \log\frac{(s+1)^2\omega^2}{2s\Omega} \nonumber  \leq \frac{C}{s} \nonumber
\end{align*}
and
\begin{align*}
\log\frac{\omega^2}{\hat\omega^2}
& = \log\frac{(s+1)^2\omega^2}{(s+1)^2\hat\omega^2} \\
& \geq \log\frac{(s+1)^2\omega^2}{(2s+C)\Omega} \nonumber \\
& = \log\frac{(s+1)^2\omega^2}{2s\Omega} - \log\left(1+\frac{C}{2s}\right) \nonumber \\
& \geq -\frac{C}{s} \nonumber
\end{align*}
since $\log(1 + C/2s) \leq C/2s$. This proves \eqref{s5_PreciseDeterminantEstimate}.
\end{proof}

\noindent Next we prove a first step towards part \textit{(3)} of Theorem \ref{s1_Theorem3}:

\begin{proposition}\label{s5_C2Estimate}
Suppose that $\omega = \hat\omega + i\partial\bar\partial\varphi$ solves \eqref{s1_NormalizedContinuityEquation} on $X\times[0,\infty)$ with $\omega_0 = \omega_{\mathrm{LF}} + i\partial\bar\partial\varphi_0$. Then $\tr_{\hat\omega}\omega\leq C$ and $\tr_\omega\hat\omega \leq C$, and hence $C^{-1}\hat\omega\leq\omega\leq C\hat\omega$.
\end{proposition}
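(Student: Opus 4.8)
The plan is to apply the Cherrier-type inequality \eqref{s2_LaplacianLogTrace} with the background metric taken to be $\hat\omega$ itself, rather than a fixed metric like $\omega_{\mathrm{LF}}$. The key structural feature to exploit is that $\hat\omega = (\omega_{\mathrm{LF}} + s\alpha)/(s+1)$ degenerates in a controlled way: along the $\partial\bar\partial\log\Omega = \alpha$ direction it grows, so that $\hat\omega$ and $\alpha/(s+1)$ become comparable as $s\to\infty$. I would first record the second-order form of the equation \eqref{s1_NormalizedContinuityEquation} by tracing against $\omega$, obtaining an identity of the schematic form $\Delta_\omega\varphi = \tr_\omega(\hat\omega - \omega_0) + s\,\tr_\omega(\hat\omega - \omega) = 2 - \tr_\omega\omega_0 - s\,\tr_\omega\omega + s\,\tr_\omega\hat\omega$ or the analogous expression written in terms of $\Ric(\omega)$. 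This gives a coercive zeroth-order term $-s\,\tr_\omega\omega$ that will be crucial for absorbing error terms.

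Next I would compute the curvature and torsion terms appearing in \eqref{s2_LaplacianLogTrace} for $\chi = \hat\omega$. Because $\hat\omega$ is built from $\omega_{\mathrm{LF}}$ and the flat-in-$y$ forms $\alpha,\beta$, and because the Inoue surface carries a solvable-group structure making these forms essentially invariant, the curvature $R(\hat\omega)$ and torsion $T(\hat\omega)$ should be explicitly computable and bounded (after the $(s+1)$ rescaling) uniformly in $s$, up to factors that are controlled by the determinant bound \eqref{s5_PreciseDeterminantEstimate} from Proposition \ref{s5_C0Estimate}. The bad terms in \eqref{s2_LaplacianLogTrace} are the torsion-coupling and the curvature term $-\chi^{p\bar q}R_{\bar qp}(\omega)$; the latter I would rewrite using \eqref{s1_NormalizedContinuityEquation}, which converts $\Ric(\omega)$ into $(\omega_0 - \omega)/s - \omega$, producing another favorable $-\tr$ term. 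Setting $Q = \log\tr_{\hat\omega}\omega - A\varphi$ and applying the maximum principle, at a maximum of $Q$ the combination $\Delta_\omega Q \le 0$ yields, after using the determinant bound to relate $\tr_{\hat\omega}\omega$ and $\tr_\omega\hat\omega$ via $\tr_\omega\hat\omega \le (\tr_{\hat\omega}\omega)(\hat\omega^2/\omega^2)$, an inequality forcing $\tr_{\hat\omega}\omega \le C$ at that point; the term $-A\varphi$ together with the $C^0$ bound $|\varphi|\le C/(s+1)$ then propagates this to a global bound.

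The main obstacle I anticipate is obtaining the torsion and curvature of $\hat\omega$ uniformly in $s$: since $\hat\omega$ is not closed and depends on $s$ through the degenerating combination $\omega_{\mathrm{LF}} + s\alpha$, one must verify that the torsion terms $g^{j\bar k}\chi^{p\bar q}\chi_{\bar sr}T^r_{pj}\bar T^s_{qk}$ do not blow up faster than the coercive $s\,\tr_\omega\omega$ term can absorb. This is where the special geometry of the Inoue surface—specifically that $d\alpha = 0$ and that the torsion of $\omega_{\mathrm{LF}}$ is concentrated in directions controlled by $\alpha$—must be used, paralleling the Chern-Ricci flow analysis of Fang-Tosatti-Weinkove-Zheng \cite{ftwz16}. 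Once $\tr_{\hat\omega}\omega\le C$ is established, the bound $\tr_\omega\hat\omega\le C$ follows from \eqref{s5_DeterminantEstimate} together with the two-dimensional identity $\tr_\omega\hat\omega = (\tr_{\hat\omega}\omega)\,\hat\omega^2/\omega^2$, and the two-sided bound $C^{-1}\hat\omega\le\omega\le C\hat\omega$ is then immediate.
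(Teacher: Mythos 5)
Your overall architecture (apply the Cherrier inequality \eqref{s2_LaplacianLogTrace} with $\chi=\hat\omega$, feed in the equation to control $\tr_{\hat\omega}\Ric(\omega)$, and run a maximum principle on $\log\tr_{\hat\omega}\omega$ minus a multiple of $\varphi$) matches the paper's, but there are two concrete gaps. First, the curvature of $\hat\omega$ is \emph{not} uniformly bounded in $s$ in the relevant sense: the computation from \cite[Lemma 4.1]{twy15} gives
\begin{align*}
g^{j\bar k}R_{\bar kj}{}^{p\bar q}g_{\bar qp} \geq -C\sqrt{s+1}\,\tr_{\hat\omega}\omega\,\tr_\omega\hat\omega,
\end{align*}
so after dividing by $\tr_{\hat\omega}\omega$ the bad term is $-C\sqrt{s+1}\,\tr_\omega\hat\omega$. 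Your test function $Q=\log\tr_{\hat\omega}\omega - A\varphi$ with $A$ a fixed constant only contributes $+A\,\tr_\omega\hat\omega$ through $\Delta_\omega\varphi = 2-\tr_\omega\hat\omega$, which cannot absorb a term growing like $\sqrt{s+1}$. The coefficient of $\varphi$ must itself grow like $\sqrt{s+1}$ (the paper uses $A\sqrt{s+1}\varphi$, which is harmless precisely because $|\varphi|\leq C/(s+1)$ makes $A\sqrt{s+1}\varphi\to 0$). Also, your "coercive term $-s\,\tr_\omega\omega$" is just the constant $-2s$, so it provides no coercivity.

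Second, and more seriously, the log-trace inequality \eqref{s2_LaplacianLogTrace} contains the gradient--torsion cross term
\begin{align*}
\frac{1}{\tr_{\hat\omega}\omega}\,2\Ree\, g^{j\bar k}\frac{\partial_j\tr_{\hat\omega}\omega}{\tr_{\hat\omega}\omega}\bar T_k,
\end{align*}
which is not the quadratic torsion term you mention and is not sign-controlled. At a maximum of your $Q$ the first-order condition converts $\partial_j\tr_{\hat\omega}\omega/\tr_{\hat\omega}\omega$ into $A\partial_j\varphi$, and Cauchy--Schwarz then produces a term of the form $-\epsilon A^2|\partial\varphi|_\omega^2 - \epsilon^{-1}|\tau|_\omega^2$; your test function generates no positive $|\partial\varphi|_\omega^2$ term to absorb the first piece. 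The paper handles this by adding a third term $1/\psi$ with $\psi=\sqrt{s+1}\varphi + C\geq 1$ to the test quantity, whose Laplacian contributes $\frac{2(s+1)}{\psi^3}|\partial\varphi|_\omega^2$ and absorbs the cross term. Without some device of this kind (or a switch to the non-log trace, which fails here for a different reason: the term $-C\sqrt{s+1}\tr_{\hat\omega}\omega\tr_\omega\hat\omega$ is then no longer divided by $\tr_{\hat\omega}\omega$), the maximum-principle argument does not close. Your final steps --- deducing $\tr_\omega\hat\omega\leq C$ from \eqref{s5_DeterminantEstimate} and the two-dimensional identity $\tr_\omega\hat\omega = (\hat\omega^2/\omega^2)\tr_{\hat\omega}\omega$, and then the two-sided metric equivalence --- are correct.
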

\begin{proof}
The equation \eqref{s1_NormalizedContinuityEquation} gives
\begin{align}\label{s5_C2EstimateComputation1}
-\tr_{\hat\omega}\Ric(\omega)
& = \frac{(s+1)\tr_{\hat\omega}\omega - \tr_{\hat\omega}\omega_0}{s} \\
& \geq -\frac{1}{s}\tr_{\hat\omega}\omega_0 \nonumber \\
& \geq -C \nonumber
\end{align}
for $s\geq 1$, since $\tr_{\hat\omega}\omega_0 \leq (s+1)\tr_{\omega_{\mathrm{LF}}}\omega_0 \leq C(s+1)$. Computing as in \cite[Lemma 4.1]{twy15}, we have
\begin{align}\label{s5_C2EstimateComputation2}
g^{j\bar k}R_{\bar kj}{}^{p\bar q}g_{\bar qp} \geq -C\sqrt{s+1}\tr_{\hat\omega}\omega\tr_\omega\hat\omega
\end{align}
and
\begin{align}\label{s5_C2EstimateComputation3}
g^{j\bar k}R^p{}_{p\bar kj} - g^{j\bar k}R_{\bar kj}{}^p{}_p - g^{j\bar k}\hat g^{p\bar q}\hat g_{\bar sr}T^r_{pj}\bar T^s_{qk} \geq -C\tr_\omega\hat\omega.
\end{align}
Substituting \eqref{s5_C2EstimateComputation1}, \eqref{s5_C2EstimateComputation2}, \eqref{s5_C2EstimateComputation3} into \eqref{s2_LaplacianLogTrace} yields
\begin{align}\label{s5_LaplacianLogTrace}
\Delta_\omega\log\tr_{\hat\omega}\omega
& \geq \frac{1}{\tr_{\hat\omega}\omega}\bigg\{-C\sqrt{s+1}\tr_{\hat\omega}\omega\tr_\omega\hat\omega - C\tr_\omega\hat\omega - C + 2\Ree g^{j\bar k}\frac{\partial_j\tr_{\hat\omega}\omega}{\tr_{\hat\omega}\omega}\bar T_k\bigg\} \\
& \geq -C\sqrt{s+1}\tr_\omega\hat\omega - C + \frac{1}{\tr_{\hat\omega}\omega}2\Ree g^{j\bar k}\frac{\partial_j\tr_{\hat\omega}\omega}{\tr_{\hat\omega}\omega}\bar T_k. \nonumber
\end{align}
Here we have used $\tr_{\hat\omega}\omega \geq C^{-1}$, which follows from the determinant estimate \eqref{s5_DeterminantEstimate}.
We have
\begin{align}\label{s5_LaplacianPhi}
\Delta_\omega\varphi = 2 - \tr_\omega\hat\omega.
\end{align}
Let $\psi = \sqrt{s+1}\varphi + C$; by Proposition \ref{s5_C0Estimate} we may assume that $\psi \geq 1$. Then $0 < 1/\psi \leq 1$ and
\begin{align}\label{s5_LaplacianPsi}
\Delta_\omega\frac{1}{\psi}
& = \frac{2(s+1)}{\psi^3}|\partial\varphi|_\omega^2 - \frac{2\sqrt{s+1}}{\psi^2} + \frac{\sqrt{s+1}}{\psi^2}\tr_\omega\hat\omega \\
& \geq \frac{2(s+1)}{\psi^3}|\partial\varphi|_\omega^2 - 2\sqrt{s+1}. \nonumber
\end{align}
Combining \eqref{s5_LaplacianLogTrace}, \eqref{s5_LaplacianPhi}, \eqref{s5_LaplacianPsi} gives
\begin{align}\label{s5_LaplacianQ1}
\Delta_\omega\left(\log\tr_{\hat\omega}\omega - A\sqrt{s+1}\varphi + \frac{1}{\psi}\right) & \geq \frac{1}{\tr_{\hat\omega}\omega}2\Ree g^{j\bar k}\frac{\partial_j\tr_{\hat\omega}\omega}{\tr_{\hat\omega}\omega}\bar T_k + (A - C)\sqrt{s+1}\tr_\omega\hat\omega \nonumber \\
& \ \ \ \ \ \ \ \ \ \ \ \ \ \ \ + \frac{2(s+1)}{\psi^3}|\partial\varphi|_\omega^2 - CA\sqrt{s+1}
\end{align}
as long as $A$ is not too small.

We now work at a maximum point $x_0$ of $\log\tr_{\hat\omega}\omega - A\sqrt{s+1}\varphi + 1/\psi$, where
\begin{align}\label{s5_LaplacianQ2}
0 & \geq \frac{1}{\tr_{\hat\omega}\omega}2\Ree g^{j\bar k}\frac{\partial_j\tr_{\hat\omega}\omega}{\tr_{\hat\omega}\omega}\bar T_k + (A - C)\sqrt{s+1}\tr_\omega\hat\omega \\
& \ \ \ \ \ \ \ \ \ \ \ \ \ \ \ + \frac{2(s+1)}{\psi^3}|\partial\varphi|_\omega^2 - CA\sqrt{s+1} \nonumber
\end{align}
and
\begin{align*}
\frac{\partial_j\tr_{\hat\omega}\omega}{\tr_{\hat\omega}\omega} = A\sqrt{s+1}\partial_j\varphi + \frac{\sqrt{s+1}\partial_j\varphi}{\psi^2}.
\end{align*}
The latter implies
\begin{align}\label{s5_CrossTerm}
\frac{1}{\tr_{\hat\omega}\omega}2\Ree g^{j\bar k}\frac{\partial_j\tr_{\hat\omega}\omega}{\tr_{\hat\omega}\omega}\bar T_k
& = \frac{1}{\tr_{\hat\omega}\omega}2\Ree g^{j\bar k}A\sqrt{s+1}\partial_j\varphi\,\bar T_k + \frac{1}{\tr_{\hat\omega}\omega}2\Ree g^{j\bar k}\frac{\sqrt{s+1}\partial_j\varphi}{\psi^2}\bar T_k \nonumber \\
& \geq -\frac{2(s+1)}{\psi^3}|\partial\varphi|_\omega^2 - \frac{A^2\psi^3|\tau|_\omega^2}{(\tr_{\hat\omega}\omega)^2} - \frac{|\tau|_\omega^2}{\psi(\tr_{\hat\omega}\omega)^2} \nonumber \\
& \geq -\frac{2(s+1)}{\psi^3}|\partial\varphi|_\omega^2 - \frac{CA^2\psi^3}{(\tr_{\hat\omega}\omega)^2}\tr_\omega\hat\omega - C\tr_\omega\hat\omega
\end{align}
where here we have used $|\tau|_\omega^2 \leq C\tr_\omega\hat\omega$ as well as $\psi(\tr_{\hat\omega}\omega)^2 \geq C^{-1}$. Substituting \eqref{s5_CrossTerm} into \eqref{s5_LaplacianQ2} gives
\begin{align}\label{s5_LaplacianQ3}
0
& \geq \left((A - C)\sqrt{s+1} - C - \frac{CA^2\psi^3}{(\tr_{\hat\omega}\omega)^2}\right)\tr_\omega\hat\omega - CA\sqrt{s+1} \\
& \geq \left(\sqrt{s+1} + 1 - \frac{C\psi^3}{(\tr_{\hat\omega}\omega)^2}\right)\tr_\omega\hat\omega - C\sqrt{s+1} \nonumber
\end{align}
after choosing $A$ so that $(A-C)\sqrt{s+1} - C \geq \sqrt{s+1} + 1$. Now we must have either
\begin{align}\label{s5_Case1}
\frac{C\psi^3}{(\tr_{\hat\omega}\omega)^2} > 1
\end{align}
or
\begin{align}\label{s5_Case2}
0 \geq \tr_\omega\hat\omega - C.
\end{align}
The $C^0$ estimate \eqref{s5_C0Estimate} applied to \eqref{s5_Case1} and the determinant estimate \eqref{s5_DeterminantEstimate} applied to \eqref{s5_Case2} show that in both cases we have
\begin{align*}
\tr_{\hat\omega(x_0)}\omega(x_0) \leq C.
\end{align*}
Finally applying once more Proposition \eqref{s5_C0Estimate} while recalling $0 < 1/\psi \leq 1$ gives $\tr_{\hat\omega}\omega \leq C$. The estimate $\tr_\omega\hat\omega \leq C$ now follows from \eqref{s5_DeterminantEstimate}.
\end{proof}

\noindent We now refine Proposition \ref{s5_C2Estimate} as follows, to obtain part \textit{(3)} of Theorem \ref{s1_Theorem3}:

\begin{proposition}\label{s5_C2DecayEstimate}
Suppose that $\omega = \hat\omega + i\partial\bar\partial\varphi$ solves \eqref{s1_NormalizedContinuityEquation} on $X\times[0,\infty)$ with $\omega_0 = \omega_{\mathrm{LF}} + i\partial\bar\partial\varphi_0$. Then $\tr_{\hat\omega}\omega - 2\leq C(s+1)^{-\frac{1}{4}}$ and $\tr_\omega\hat\omega - 2 \leq C(s+1)^{-\frac{1}{4}}$, and furthermore
\begin{align}\label{s5_ImprovedUniformEquivalence}
(1-C(s+1)^{-1/8})\hat\omega\leq\omega\leq (1+C(s+1)^{-1/8})\hat\omega.
\end{align}
\end{proposition}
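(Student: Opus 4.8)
The plan is to upgrade the uniform equivalence of Proposition \ref{s5_C2Estimate} into the stated quantitative rates by first pinning the two trace quantities $\tr_{\hat\omega}\omega$ and $\tr_\omega\hat\omega$ near their flat value $2$, and then reading off the metric bound \eqref{s5_ImprovedUniformEquivalence} from elementary eigenvalue algebra. I first observe that, thanks to the determinant estimate \eqref{s5_PreciseDeterminantEstimate}, it suffices to prove the single upper bound
\[
\tr_{\hat\omega}\omega\le 2+C(s+1)^{-1/4}.
\]
Indeed, writing $\lambda_1,\lambda_2>0$ for the eigenvalues of $\omega$ with respect to $\hat\omega$, we have $\tr_{\hat\omega}\omega=\lambda_1+\lambda_2$, $\tr_\omega\hat\omega=(\lambda_1+\lambda_2)/(\lambda_1\lambda_2)$ and $\lambda_1\lambda_2=\omega^2/\hat\omega^2$. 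By \eqref{s5_PreciseDeterminantEstimate}, $e^{-C/s}\le\lambda_1\lambda_2\le e^{C/s}$, so the bound above gives $\tr_\omega\hat\omega\le(2+C(s+1)^{-1/4})e^{C/s}\le 2+C(s+1)^{-1/4}$, using $1/s\le(s+1)^{-1/4}$ for $s$ large. The matching lower bounds are automatic: by the arithmetic--geometric mean inequality $\tr_{\hat\omega}\omega\ge 2\sqrt{\lambda_1\lambda_2}\ge 2-C/s$, and likewise $\tr_\omega\hat\omega\ge 2-C/s$.

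The heart of the matter is the displayed upper bound, which I would attack by the maximum principle applied to a test function of the form $Q=\tr_{\hat\omega}\omega+A(s+1)^{a}\varphi$ for a small exponent $a\in(0,1)$ and a large constant $A$. Since $|\varphi|\le C/(s+1)$ by Proposition \ref{s5_C0Estimate}, at a maximum point $x_0$ of $Q$ one has
\[
\sup_X\tr_{\hat\omega}\omega\le \tr_{\hat\omega}\omega(x_0)+2A(s+1)^{a}\|\varphi\|_{C^0}\le \tr_{\hat\omega}\omega(x_0)+C(s+1)^{a-1},
\]
so it is enough to estimate $\tr_{\hat\omega}\omega(x_0)$. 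At $x_0$ I would combine the trace formula \eqref{s2_LaplacianTrace}, the identity $\Delta_\omega\varphi=2-\tr_\omega\hat\omega$ of \eqref{s5_LaplacianPhi}, and the continuity equation \eqref{s1_NormalizedContinuityEquation} in the form
\[
-\tr_{\hat\omega}\Ric(\omega)=\frac{(s+1)\tr_{\hat\omega}\omega-\tr_{\hat\omega}\omega_0}{s},
\]
together with the curvature and torsion estimates \eqref{s5_C2EstimateComputation2}--\eqref{s5_C2EstimateComputation3} and the bound $\tr_{\hat\omega}\omega_0\le C(s+1)$. The intended mechanism is that the genuinely large favorable term of scale $(s+1)$ coming from $-\tr_{\hat\omega}\Ric(\omega)$, reinforced by the auxiliary $(s+1)^{a}(\tr_\omega\hat\omega-2)$ term produced by $\Delta_\omega\varphi$, should dominate the unfavorable curvature contribution and force $\tr_{\hat\omega}\omega(x_0)$ close to $2$.

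The principal obstacle is precisely the curvature term $g^{j\bar k}R_{\bar kj}{}^{p\bar q}g_{\bar qp}$. The crude estimate \eqref{s5_C2EstimateComputation2} carries a factor $\sqrt{s+1}$, reflecting the degeneration of $\hat\omega$ as the fibre direction collapses; inserted directly it enters the above inequality with an unfavorable sign and size $\sim\sqrt{s+1}\,\tr_\omega\hat\omega$, which swamps the $(s+1)$-scale gain and merely reproduces the uniform bound of Proposition \ref{s5_C2Estimate}. To extract genuine decay I expect one must sharpen \eqref{s5_C2EstimateComputation2}, exploiting that $\hat\omega$ is asymptotically flat along the leaves of the Inoue foliation: concretely, that the offending bisectional-curvature components either carry an extra factor $(s+1)^{-1/2}$ or appear multiplied by the trace deficit $\tr_{\hat\omega}\omega-2$, so that the $\sqrt{s+1}$ is absorbed. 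Establishing this refined curvature estimate, and verifying that feeding it into the balancing above with the optimal choice of $a$ and $A$ returns exactly the exponent $1/4$, is where the real work lies.

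Granting the two trace bounds, \eqref{s5_ImprovedUniformEquivalence} follows from elementary eigenvalue algebra. With $\sigma_1=\tr_{\hat\omega}\omega$ and $\sigma_2=\lambda_1\lambda_2=\omega^2/\hat\omega^2$,
\[
(\lambda_1-\lambda_2)^2=\sigma_1^2-4\sigma_2\le\big(2+C(s+1)^{-1/4}\big)^2-4e^{-C/s}\le C(s+1)^{-1/4},
\]
where I used \eqref{s5_PreciseDeterminantEstimate} in the form $\sigma_2\ge e^{-C/s}\ge 1-C/s$. Hence $|\lambda_1-\lambda_2|\le C(s+1)^{-1/8}$, and since $\lambda_i=\tfrac12\big(\sigma_1\pm|\lambda_1-\lambda_2|\big)$ with $|\sigma_1-2|\le C(s+1)^{-1/4}$, each eigenvalue satisfies $|\lambda_i-1|\le C(s+1)^{-1/8}$, which is exactly \eqref{s5_ImprovedUniformEquivalence}. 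It is this passage from the trace rate $(s+1)^{-1/4}$ to the metric rate $(s+1)^{-1/8}$ through a square root that accounts for the two distinct exponents in the statement.
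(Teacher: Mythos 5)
Your overall skeleton --- a maximum principle applied to $\tr_{\hat\omega}\omega$ plus a large $s$-dependent multiple of $\varphi$, the use of $\|\varphi\|_{C^0}\le C/(s+1)$ from Proposition \ref{s5_C0Estimate} to control the resulting oscillation error, a balancing of exponents to produce the rate $(s+1)^{-1/4}$, and the final eigenvalue algebra converting the two trace bounds into \eqref{s5_ImprovedUniformEquivalence} --- is the same as the paper's, and your closing computation with $\sigma_1^2-4\sigma_2$ is a correct substitute for the citation of \cite[Lemma 7.4]{twy15}. However, the proof is not complete: you explicitly defer ``the real work'' to a refined version of the curvature estimate \eqref{s5_C2EstimateComputation2} in which the factor $\sqrt{s+1}$ is improved or paired with the trace deficit, and no such refinement is needed (nor is one used in the paper). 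The point you are missing is that Proposition \ref{s5_C2Estimate} has already supplied the \emph{uniform} bounds $\tr_{\hat\omega}\omega\le C$ and $\tr_\omega\hat\omega\le C$, so the crude estimate \eqref{s5_C2EstimateComputation2} gives $g^{j\bar k}R_{\bar kj}{}^{p\bar q}g_{\bar qp}\ge -C\sqrt{s+1}\,\tr_{\hat\omega}\omega\,\tr_\omega\hat\omega\ge -C\sqrt{s+1}$, a spatially constant lower bound; together with \eqref{s5_C2EstimateComputation1} and \eqref{s5_C2EstimateComputation3} this yields $\Delta_\omega\tr_{\hat\omega}\omega\ge -C\sqrt{s+1}$ outright.

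A constant bad term of size $\sqrt{s+1}$ is then beaten not by an ``$(s+1)$-scale gain from $-\tr_{\hat\omega}\Ric(\omega)$'' --- that term is only $O(1)$, since $\frac{(s+1)\tr_{\hat\omega}\omega-\tr_{\hat\omega}\omega_0}{s}$ is bounded once $\tr_{\hat\omega}\omega\le C$ and $\tr_{\hat\omega}\omega_0\le C(s+1)$ --- but by the term $A(\tr_\omega\hat\omega-2)$ produced by $-A\Delta_\omega\varphi$ with $A$ chosen much larger than $\sqrt{s+1}$. (Note also the sign slip: your test function must be $\tr_{\hat\omega}\omega-A\varphi$, not $\tr_{\hat\omega}\omega+A\varphi$, for this term to come out with the favorable sign.) Writing $\tr_\omega\hat\omega-2=(\tr_{\hat\omega}\omega-2)-(1-\hat\omega^2/\omega^2)\tr_{\hat\omega}\omega$ and using \eqref{s5_PreciseDeterminantEstimate} to bound the correction by $C/s$, the maximum principle gives $\tr_{\hat\omega}\omega-2\le C\sqrt{s+1}/A+CA/(s+1)+C/s$, which is exactly the balance you anticipated; it closes with the \emph{unrefined} curvature estimate and the optimal choice $A=(s+1)^{3/4}$, yielding the exponent $1/4$. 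As written, your argument leaves the central step unproven and points toward an unnecessary refined curvature computation rather than toward the two inputs that actually make the step work, namely the uniform trace bounds of Proposition \ref{s5_C2Estimate} and the freedom to take $A$ growing in $s$.
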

\begin{proof}
Substituting the estimates $\tr_{\hat\omega}\omega \leq C$ and $\tr_\omega\hat\omega \leq C$ into our earlier computations \eqref{s5_C2EstimateComputation1}, \eqref{s5_C2EstimateComputation2}, \eqref{s5_C2EstimateComputation3} we obtain from \eqref{s2_LaplacianTrace}
\begin{align}
\Delta_\omega\tr_{\hat\omega}\omega \geq -C\sqrt{s+1}.
\end{align}
Thus
\begin{align}\label{s5_RefinedC2Estimate1}
\Delta_\omega(\tr_{\hat\omega}\omega - A\varphi) \geq A(\tr_\omega\hat\omega - 2) - C\sqrt{s+1}.
\end{align}
Using
\begin{align*}
\tr_\omega\hat\omega
& = \frac{\hat\omega^2}{\omega^2}\tr_{\hat\omega}\omega \\
& = \tr_{\hat\omega}\omega - \left(1 - \frac{\hat\omega^2}{\omega^2}\right)\tr_{\hat\omega}\omega \nonumber
\end{align*}
\eqref{s5_RefinedC2Estimate1} becomes
\begin{align}\label{s5_RefinedC2Estimate2}
\Delta_\omega(\tr_{\hat\omega}\omega - A\varphi) \geq A(\tr_{\hat\omega}\omega - 2) - A\left(1 - \frac{\hat\omega^2}{\omega^2}\right)\tr_{\hat\omega}\omega - C\sqrt{s+1}.
\end{align}
By \eqref{s5_PreciseDeterminantEstimate}
\begin{align}\label{s5_VolumeDecayEstimate}
1 - \frac{\omega^2}{\hat\omega^2}
& \leq 1 - e^{-C/s} \\
& \leq C(s+1)^{-\frac{1}{4}} \nonumber
\end{align}
for $s\geq 1$. Now \eqref{s5_VolumeDecayEstimate} and Proposition \ref{s5_C2Estimate} imply
\begin{align*}
\Delta_\omega(\tr_{\hat\omega}\omega - A\varphi) \geq A(\tr_{\hat\omega}\omega - 2) - CA(s + 1)^{-\frac{1}{4}} - C\sqrt{s+1}.
\end{align*}
At a maximum point $x_0$ of $\tr_{\hat\omega}\omega - A\varphi$,
\begin{align*}
0 \geq A(\tr_{\hat\omega(x_0)}\omega(x_0) - 2) - CA(s + 1)^{-\frac{1}{4}} - C\sqrt{s+1}
\end{align*}
or
\begin{align*}
\tr_{\hat\omega(x_0)}\omega(x_0) - 2 \leq \frac{C\sqrt{s+1}}{A} + C(s+1)^{-\frac{1}{4}}.
\end{align*}
Hence
\begin{align*}
\tr_{\hat\omega}\omega - 2 \leq A(\varphi - \inf_X\varphi) + \frac{C\sqrt{s+1}}{A} + C(s+1)^{-\frac{1}{4}}
\end{align*}
and by Proposition \ref{s5_C0Estimate}
\begin{align*}
\tr_{\hat\omega}\omega - 2 \leq \frac{CA}{s+1} + \frac{C\sqrt{s+1}}{A} + C(s+1)^{-\frac{1}{4}}.
\end{align*}
Choose $A = (s+1)^\frac{3}{4}$; then
\begin{align}\label{s5_ImprovedTraceEstimate}
\tr_{\hat\omega}\omega - 2 \leq C(s+1)^{-\frac{1}{4}}.
\end{align}
Now by \eqref{s5_ImprovedTraceEstimate} and \eqref{s5_PreciseDeterminantEstimate}
\begin{align*}
\tr_\omega\hat\omega - 2
& = \frac{\hat\omega^2}{\omega^2}\tr_{\hat\omega}\omega - 2 \\
& = \frac{\hat\omega^2}{\omega^2}\left((\tr_{\hat\omega}\omega - 2) + 2\left(1 - \frac{\omega^2}{\hat\omega^2}\right)\right) \nonumber \\
& \leq Ce^{C/s}(s+1)^{-\frac{1}{4}} \nonumber \\
& \leq C(s+1)^{-\frac{1}{4}}. \nonumber
\end{align*}
for $s \geq 1$.

By an elementary inequality \cite[Lemma 7.4]{twy15}, the inequalities $\lambda_1 + \lambda_2 \leq 2 + \epsilon$ and $\frac{1}{\lambda_1} + \frac{1}{\lambda_2} \leq 2 + \epsilon$ imply $|\lambda_j - 1| \leq 2\sqrt{\epsilon}$, so the estimates $\tr_{\hat\omega}\omega - 2\leq C(s+1)^{-\frac{1}{4}}$ and $\tr_\omega\hat\omega - 2\leq C(s+1)^{-\frac{1}{4}}$ give \eqref{s5_ImprovedUniformEquivalence}.
\end{proof}

\noindent Finally we obtain part \textit{(4)} of Theorem \ref{s1_Theorem3}:

\begin{proposition}
Suppose that $\omega = \hat\omega + i\partial\bar\partial\varphi$ solves \eqref{s1_NormalizedContinuityEquation} on $X\times[0,\infty)$ with $\omega_0 = \omega_{\mathrm{LF}} + i\partial\bar\partial\varphi_0$. Then
\begin{align}\label{s5_RicciEstimate}
-C\omega\leq\Ric(\omega)\leq C\omega.
\end{align}
\end{proposition}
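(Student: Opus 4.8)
The plan is to read the curvature estimate directly off equation \eqref{s1_NormalizedContinuityEquation}, using the uniform equivalence $\omega\sim\hat\omega$ already established in Proposition \ref{s5_C2DecayEstimate}. Solving \eqref{s1_NormalizedContinuityEquation} for the Chern--Ricci curvature gives
\begin{align*}
\Ric(\omega) = \frac{\omega_0 - (s+1)\omega}{s},
\end{align*}
so the entire statement reduces to comparing the fixed form $\omega_0$ with $s\omega$ in both directions.

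For the lower bound I would simply discard the non-negative contribution $\omega_0\geq 0$, obtaining $\Ric(\omega)\geq -\tfrac{s+1}{s}\omega\geq -2\omega$ for $s\geq 1$, which is already of the required form. For the upper bound I would instead discard the non-negative term $(s+1)\omega$, giving $\Ric(\omega)\leq \omega_0/s$, and then prove $\omega_0\leq Cs\omega$. This is where the earlier results enter: since $\omega_0$ and $\omega_{\mathrm{LF}}$ are fixed Hermitian metrics on the compact manifold $X$ they are uniformly equivalent, so $\omega_0\leq C\omega_{\mathrm{LF}}$; from $\hat\omega=(\omega_{\mathrm{LF}}+s\alpha)/(s+1)\geq \omega_{\mathrm{LF}}/(s+1)$ we get $\omega_{\mathrm{LF}}\leq (s+1)\hat\omega$; and Proposition \ref{s5_C2DecayEstimate} gives $\hat\omega\leq 2\omega$ once $s$ is sufficiently large. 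Chaining these bounds yields $\omega_0\leq 2C(s+1)\omega$, hence $\Ric(\omega)\leq \omega_0/s\leq C'\omega$ for all sufficiently large $s$.

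The remaining parameter range, $s$ in a fixed compact interval $[0,s_0]$, I would dispatch by a compactness argument: the solution is a smooth family of positive Hermitian metrics on $X\times[0,s_0]$, so the eigenvalues of $\Ric(\omega)$ with respect to $\omega$ form a continuous function on the compact set $X\times[0,s_0]$ and are therefore uniformly bounded; absorbing this bound into $C$ completes the proof.

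I do not expect a substantial analytic obstacle here, since all of the genuine work has already been carried out in establishing the uniform equivalence $\omega\sim\hat\omega$. The only point demanding care is the bookkeeping of the $1/s$ factors: the naive lower bound degenerates like $(s+1)/s$ as $s\to 0$, which is precisely why the main argument is run for $s$ large and the finite interval $[0,s_0]$ is treated separately by continuity.
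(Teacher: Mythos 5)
Your proof is correct and follows essentially the same route as the paper: solve \eqref{s1_NormalizedContinuityEquation} for $\Ric(\omega)$, drop $\omega_0\geq 0$ for the lower bound, and chain $\omega_0\leq C\omega_{\mathrm{LF}}\leq C(s+1)\hat\omega\leq C(s+1)\omega$ for the upper bound. The only difference is your explicit compactness argument for $s$ in a bounded interval, which the paper leaves implicit.
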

\begin{proof}
By \eqref{s1_NormalizedContinuityEquation}
\begin{align*}
\Ric(\omega) = \frac{\omega_0 - (s+1)\omega}{s}.
\end{align*}
This implies
\begin{align*}
-\left(1 + \frac{1}{s}\right)\omega \leq \Ric(\omega) \leq \frac{1}{s}\omega_0.
\end{align*}
Now $\omega_0 \leq C\omega_{\mathrm{LF}} \leq C(s+1)\hat\omega \leq C(s+1)\omega$ by Proposition \ref{s5_C2Estimate}, so in fact
\begin{align*}
-\left(1 + \frac{1}{s}\right)\omega \leq \Ric(\omega) \leq C\left(1 + \frac{1}{s}\right)\omega
\end{align*}
which implies \eqref{s5_RicciEstimate}.
\end{proof}

\bibliography{bib_for_all}
\end{document}